\theoremstyle{definition}
\newtheorem{thm}{\textbf{Theorem} }[section]
\newtheorem{conj}{\textbf{Conjecture}}
\newtheorem{lem}[thm]{\textbf{Lemma}}
\newtheorem{prop}[thm]{\textbf{Proposition}}
\newtheorem*{rmk}{\textbf{Remark}}
\newtheorem{defn}[thm]{\textbf{Definition}}
\numberwithin{equation}{section} \makeatletter
\begin{document}

\title{On the Alexander polynomial and the signature invariant of two-bridge knots}

\author{Wenzhao Chen}
\address{Department of Mathematics, Michigan State University, 619 Red Cedar Road, C535 Wells Hall, East Lansing, MI 48824}
\email{chenwenz@math.msu.edu}
\thanks{}


\subjclass[2010]{Primary 57M25; Secondary 57M27.}

\begin{abstract}
Fox conjectured the Alexander polynomial of an alternating knot is trapezoidal, i.e.\ the coefficients first increase, then stabilize and finally decrease in a symmetric way. Recently, Hirasawa and Murasugi further conjectured a relation between the number of the stable coefficients in the Alexander polynomial and the signature invariant. In this paper we prove the Hirasawa-Murasugi conjecture for two-bridge knots. 
\end{abstract}

\maketitle
\section{Introduction}
A knot is said to be alternating if it admits a diagram in which the crossings alternate between over- and underpasses. In 1962, Fox posed the following conjecture concerning a curious behavior of the Alexander polynomial of an alternating knot.
\begin{conj}[\cite{Fox62}]\label{Foxconj}
Let $K$ be an alternating knot with the Alexander polynomial $\Delta_K(t)=\Sigma_{j=0}^{2n}(-1)^ja_jt^{2n-j}$, $a_j>0$. Then
$$a_0<a_1<\cdots<a_{n-m-1}<a_{n-m}=\cdots=a_{n+m}>a_{n+m+1}>\cdots>a_{2n}.$$
\end{conj}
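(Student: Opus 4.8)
The plan is to recast the statement as a counting problem and then impose enough algebraic structure on the count to force the trapezoidal profile. First I would fix a reduced alternating diagram $D$ of $K$ and expand $\Delta_K(t)$ as a state sum — via Kauffman's clock theorem, or equivalently via the spanning-tree expansion of the Conway/Alexander polynomial over the Tait (checkerboard) graph of $D$. The point of working with an \emph{alternating} diagram is that all contributing states carry coherent signs, so after the normalization $\Delta_K(t)=\sum_{j=0}^{2n}(-1)^j a_j t^{2n-j}$ one obtains $a_j=\#\{\text{states of weight } j\}$, a sequence of honest nonnegative integers. This already recovers the Crowell--Murasugi facts (that $a_j>0$, that the top degree is $2\cdot\mathrm{genus}(K)$, and that the signs genuinely alternate), and it reduces the conjecture to the purely enumerative assertion that $(a_0,\dots,a_{2n})$ is symmetric, rises strictly to a central plateau, and descends strictly thereafter.

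Symmetry $a_j=a_{2n-j}$ is the cheap input: it comes from the palindromic identity $\Delta_K(t)\doteq\Delta_K(t^{-1})$ built into the Seifert pairing, realized on the state model as a weight-reversing involution $j\mapsto 2n-j$. The substantive part is the rise. My approach would be to endow the graded space $V=\bigoplus_{j=0}^{2n}V_j$ with $\dim V_j=a_j$ — the natural candidate being the thin knot Floer homology $\widehat{HFK}(K)$, whose rank in Alexander grading $s=n-j$ equals $a_j$ — with a \emph{hard-Lefschetz} action of $\mathfrak{sl}_2$ in which the raising operator $L\colon V_j\to V_{j+1}$ has degree one. Hard Lefschetz means each iterate $L^{2(n-j)}\colon V_j\to V_{2n-j}$ is an isomorphism; this forces $L\colon V_j\to V_{j+1}$ to be injective for every $j<n$, hence $a_j\le a_{j+1}$ on the first half and, by the involution, $a_j\ge a_{j+1}$ on the second — exactly the weak trapezoidal inequalities.

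To sharpen the weak inequalities into the precise shape of Conjecture \ref{Foxconj} — strict growth $a_j<a_{j+1}$ up to the index $n-m$, then a flat plateau of odd width $2m+1$ — I would track where $L$ fails to be surjective. Writing $\Lambda$ for the lowering operator, the cokernel of $L\colon V_j\to V_{j+1}$ is carried by the $\mathfrak{sl}_2$-highest-weight vectors of weight $2(j-n)$; the inequality is strict precisely while such vectors persist, and the plateau sets in exactly when they are exhausted. Its width is then the multiplicity of the shortest (weight-zero) irreducible summands, a quantity I expect to be pinned down by the same geometric invariant $m$ that Hirasawa and Murasugi relate to the signature — so that this reformulation would simultaneously explain the plateau and its predicted length.

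The hard part will be constructing the degree-one $\mathfrak{sl}_2$-action and proving its hard-Lefschetz property \emph{in full generality}, and this is precisely where the general alternating case outruns the tractable sub-families. A two-bridge knot carries a continued-fraction description that turns the state sum into a transfer-matrix recursion, so the rise and the plateau drop out of a one-dimensional induction; a \emph{special} alternating knot has a matroidal (series-parallel-built) Tait graph whose spanning-tree generating function is an $h$-vector, for which Lefschetz structure is classically available. A general reduced alternating diagram offers neither a linear order on its crossings nor a series-parallel Tait graph, so the raising operator must be manufactured intrinsically — from the symmetrized Seifert form, or from the knot Floer differential — and its injectivity below the middle proved with no recursion to lean on. I would concentrate essentially all the effort there, fully expecting that, short of a clean Lefschetz operator, one can still extract log-concavity-type partial bounds $a_{j-1}a_{j+1}\le a_j^2$ and settle Conjecture \ref{Foxconj} unconditionally only on those families where a spanning-tree or continued-fraction model reinstates the missing algebraic structure.
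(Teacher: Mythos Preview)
The statement you are addressing is Fox's trapezoidal conjecture, which the paper does \emph{not} prove: it says explicitly that ``This conjecture remains open.'' The paper's contribution is Theorem~\ref{main}, the Hirasawa--Murasugi strengthening for two-bridge knots only, established by Hartley's extended-diagram induction together with Shinohara's signature formula. There is therefore no ``paper's own proof'' of Conjecture~\ref{Foxconj} to compare against.

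Your proposal is not a proof but a research programme, and you say so yourself in the last paragraph. The decisive gap is the hard-Lefschetz $\mathfrak{sl}_2$-action: you posit a degree-one raising operator $L\colon V_j\to V_{j+1}$ on a graded space with $\dim V_j=a_j$ and assume that $L^{2(n-j)}$ is an isomorphism, but this assumption is logically equivalent to the weak unimodality $a_j\le a_{j+1}$ for $j<n$ that you are trying to prove. Unless $L$ is constructed concretely from the knot data and its injectivity below the middle is established by some independent mechanism, invoking hard Lefschetz is circular. No such operator is presently known on $\widehat{HFK}$ of a general alternating knot, and the log-concavity $a_{j-1}a_{j+1}\le a_j^2$ that would follow from a genuine Lefschetz package is itself an open problem strictly stronger than Conjecture~\ref{Foxconj}.

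Your own final sentence concedes this: you expect to settle the conjecture ``unconditionally only on those families where a spanning-tree or continued-fraction model reinstates the missing algebraic structure.'' For two-bridge knots that continued-fraction recursion is exactly Hartley's argument, which the paper already uses; for general alternating knots your outline supplies no new input beyond the wish that a Lefschetz operator exist.
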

Polynomials satisfying the above condition are called trapezoidal, so this conjecture is known as Fox's trapezoidal conjecture. This conjecture remains open.It is, however, supported by the verification on several classes of alternating knots. The case of two-bridge knots is confirmed by Hartley \cite{Har79}. More generally Murasugi proved it for alternating algebraic knots \cite{Mur85}. The case of genus two alternating knots has also been verified by Ozsv\'ath and Szab\'o using Heegaard Floer homology \cite{OS03}, and by Jong via a combinatorial method \cite{Jon09}. Recently, Hirasawa and Murasugi showed that the conjecture holds for alternating stable knots \cite{HM13}. Moreover, in this case they observed that the signature of such knots are zero, and $m=0$ in Conjecture \ref{Foxconj}. Therefore, this progress led them to pose the following strengthened conjecture.

\begin{conj}[\cite{HM13}]\label{HMconjecture}
Let $K$ be an alternating knot, whose signature $|\sigma(K)|=2k$ and the Alexander polynomial $\Delta_K(t)=\Sigma_{j=0}^{2n}(-1)^ja_jt^{2n-j}$, $a_j>0$. Then
$$a_0<a_1<\cdots<a_{n-m-1}<a_{n-m}=\cdots=a_{n+m}>a_{n+m+1}>\cdots>a_{2n},$$
moreover, $m\leq k$.
\end{conj}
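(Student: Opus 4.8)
The plan is to derive both $\Delta_K$ and $\sigma(K)$ from a single combinatorial object attached to a reduced alternating diagram $D$ of $K$, and then to compare them. Seifert's algorithm applied to $D$ yields a minimal-genus Seifert surface, so the associated Seifert matrix $V$ has size $2n \times 2n$, with $\Delta_K(t) \doteq \det(t^{1/2}V - t^{-1/2}V^{\mathsf T})$ realizing the $a_j$ of the statement and $\sigma(K) = \operatorname{sign}(V + V^{\mathsf T})$. Because $D$ is alternating, the symmetrized form $V + V^{\mathsf T}$ is, after a standard change of basis, the Goeritz form of one checkerboard colouring, so its inertia — and hence $k = |\sigma(K)|/2$, which records the imbalance $|p-q|/2$ between the $p$ positive and $q$ negative eigenvalues — can be read off the Tait graph of $D$ via the Gordon–Litherland formula. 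The objective is then to bound the width $2m+1$ of the flat part of $\Delta_K$ by this inertia datum.

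Establishing the trapezoidal shape is itself the content of Fox's (still open) conjecture for general alternating knots, and the most structured handle available is Heegaard Floer homology. By Ozsv\'ath–Szab\'o, $\widehat{\mathrm{HFK}}(K)$ is \emph{thin}: it is supported on a single diagonal $\delta = M - A$ determined by $\sigma(K)$, with $\operatorname{rank}\widehat{\mathrm{HFK}}(K,j) = a_j$. I would reinterpret the $a_j$ as the Betti numbers of a thin complex and try to deduce the increase–flat–decrease pattern from the algebra of such complexes. Thinness alone, however, fixes only the diagonal (a single overall shift) and not the shape; it controls neither unimodality nor the flat width on its own, and so it must be fused with the diagram-level signature data of the first step.

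The heart of the matter is the sharpened inequality $m \le k$, which must relate the \emph{width} $2m+1$ of the flat top to the \emph{imbalance} $2k$ of the signature. Since the diagonal is a single number, I would extract the missing information from the Tristram–Levine signature function $\sigma_\omega(K)$, whose jumps occur at the unit-circle roots of $\Delta_K$ and which satisfies $\sigma(K) = \sigma_{-1}(K)$. Reparametrizing $\Delta_K(t) = t^{-n}P(t+t^{-1})$, I would try to show that a flat top of width $2m+1$ forces a factor of $P$ with $m$ roots in $[-2,2]$, each producing a unit-circle root pair of $\Delta_K$ and a corresponding jump of $\sigma_\omega$ across $\omega = -1$, so that $2m \le |\sigma(K)| = 2k$. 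In other words, the combinatorial flatness at the peak of $(a_j)$ would be converted into an analytic statement about unit-circle roots, and the signature would cap the number of such roots.

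The main obstacle is precisely this last step in full generality. For two-bridge knots the continued-fraction expansion supplies a recursion that tracks the coefficient sequence and the signature simultaneously, but a general alternating diagram carries no such recursive skeleton, so the required identity between flat-top width and the unit-circle-root/signature-jump count becomes a global assertion about the Tait graph with no evident induction. Bridging this gap — showing that the Goeritz/Seifert inertia governs not merely the sign of $\Delta_K$ at $-1$ but the local flatness of its coefficients at the peak — is the genuinely hard part and is why the conjecture remains open beyond special families. A realistic intermediate target would be to carry out the root-counting argument for the classes that do admit a plumbing or tangle decomposition, thereby upgrading Murasugi's proof of trapezoidality for alternating algebraic knots to include the sharpened bound $m \le k$, before attempting the fully general Tait-graph statement.
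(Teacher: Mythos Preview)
The statement you were asked to prove is a \emph{conjecture}, and the paper does not prove it in general; it establishes only the two-bridge case (Theorem~\ref{main}) and the genus-two case (Theorem~\ref{genus two case}). You seem to have recognized this, since your proposal is not a proof but a research outline that explicitly concedes the main step is missing. So there is nothing to compare against a ``paper's proof'' of Conjecture~\ref{HMconjecture} itself.

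That said, it is worth noting how far your outline is from the method the paper actually uses for two-bridge knots. The paper's argument is purely combinatorial: it parametrizes two-bridge knots by admissible pairs $(p,q)$, inducts from $(1,1)$ via Hartley's moves $T_1,T_2,T_3$, and tracks simultaneously the arc sequence $\alpha_i$ (the Alexander coefficients, read off the extended diagram) and the signature (via Shinohara's formula). The delicate part is the $T_1$ move, where the radius of the flat top can grow; the paper controls this by showing that when the starting index of the stable terms fails to advance, the bottom sequence $b_i$ must vanish for more than half the indices, which in turn forces $\sigma\ge 0$ and gives just enough room for the inequality. None of this involves Heegaard Floer thinness, Tristram--Levine signatures, or root-counting.

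On the substance of your sketch, the genuine gap is the step ``a flat top of width $2m+1$ forces a factor of $P$ with $m$ roots in $[-2,2]$.'' Flatness of the coefficient sequence at the peak is a statement about finitely many consecutive differences $a_{j}-a_{j-1}$ vanishing; it does not by itself impose any root of $\Delta_K$ on the unit circle, let alone $m$ of them near $\omega=-1$. (For instance, polynomials with a long plateau in their coefficients need not have any roots on $|t|=1$.) Without that implication, the signature-jump bookkeeping never engages, and the chain from ``flat width $2m+1$'' to ``$2m\le|\sigma(K)|$'' is broken at its first link. Your final paragraph is an honest acknowledgment of this, but it means the proposal is a plan rather than a proof.
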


We provide some evidence supporting this conjecture in this paper. We first observe that the case of genus two knots can be confirmed easily by using a result of Ozsv\'ath and Szab\'o in \cite{OS03}, or by Jong's inequalities in \cite{Jon10}, which is pointed out to the author by Kunio Murasugi. 
\begin{thm}\label{genus two case}
If $K$ is an alternating knot of genus two, then it satisfies the statement of Conjecture \ref{HMconjecture}.
\end{thm}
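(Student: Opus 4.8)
\emph{Proof strategy.}
The plan is to reduce the theorem to two numerical facts about the coefficients of $\Delta_K$ and the signature, after which only a short case check remains. Since $g(K)=2$ and $K$ is alternating, $\Delta_K$ has degree $2g=4$ and all of its coefficients are nonzero, so I would write $\Delta_K(t)=a_0t^4-a_1t^3+a_2t^2-a_1t+a_0$ with $a_0,a_1,a_2>0$. Mirroring changes neither $\Delta_K$, nor $|\sigma(K)|$, nor the width $m$, so I may assume $\sigma(K)\le 0$; putting $k=|\sigma(K)|/2\in\{0,1,2\}$, the Ozsv\'ath--Szab\'o identity $\tau(K)=-\sigma(K)/2$ for alternating knots then gives $\tau(K)=k$.

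The first fact I would record is the classical identity
\[
2a_0-2a_1+a_2\;=\;\Delta_K(1)\;=\;(-1)^{\sigma(K)/2},
\]
which holds for any genus-two knot: the symmetric Seifert-matrix representative $\det\!\big(t^{1/2}V-t^{-1/2}V^{T}\big)$ equals $\det(V-V^{T})=1$ at $t=1$, the intersection form of a Seifert surface being skew-symmetric and unimodular, while normalizing the leading coefficient to be positive multiplies $\Delta_K$ by $(-1)^{g-\sigma(K)/2}$, which equals $(-1)^{\sigma(K)/2}$ when $g=2$. The second, and substantive, fact is the inequality
\[
a_1\;\ge\;2a_0+\varepsilon_K,\qquad
\varepsilon_K=\begin{cases}0&\text{if }|\sigma(K)|=0,\\1&\text{if }|\sigma(K)|=2,\\-1&\text{if }|\sigma(K)|=4,\end{cases}
\]
and this is where I would use the results cited in the statement. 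By \cite{OS03} the knot Floer homology of an alternating knot is thin, supported in the single $\delta$-grading determined by $\sigma(K)$, with $\operatorname{rk}\widehat{HFK}(K,s)=a_{2-|s|}$ for $|s|\le 2$; and such a thin complex decomposes, up to filtered chain homotopy, as a staircase of length $|\tau(K)|$ together with a direct sum of square (``box'') summands, each contributing $1,2,1$ to three consecutive Alexander gradings. Solving for the number of boxes grading by grading, one finds that non-negativity of these counts is exactly the displayed inequality together with the identity above. Alternatively one can simply quote Jong's explicit genus-two coefficient inequalities \cite{Jon10}. I expect this bookkeeping --- in particular having to separate the three cases $\tau(K)\in\{0,1,2\}$ for the length of the staircase --- to be the only step requiring genuine care.

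Granting the two facts, the remainder is mechanical, and I would argue as follows. Since $a_0\ge 1$, we get $a_1\ge 2a_0+\varepsilon_K\ge 2a_0-1\ge a_0$, so $a_0\le a_1$; and $a_2-a_1=a_1-2a_0+(-1)^{\sigma(K)/2}\ge\varepsilon_K+(-1)^{\sigma(K)/2}$, which is non-negative in each of the three signature cases, so $a_1\le a_2$. If $a_0=a_1$, then $a_0\ge 2a_0+\varepsilon_K$ forces $\varepsilon_K=-1$ and $a_0=a_1=1$, whereupon the identity gives $a_2=1$; hence the pattern $a_0=a_1<a_2$ never occurs and $\Delta_K$ is trapezoidal --- in particular this recovers Conjecture \ref{Foxconj} for $K$. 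For the bound $m\le k$: if $\sigma(K)=0$ then $\varepsilon_K=0$ forces $a_2-a_1\ge 1$, so $a_1<a_2$ and $m=0=k$; if $|\sigma(K)|=2$ then $a_1\ge 2a_0+1>a_0$, so $m\le 1=k$; and if $|\sigma(K)|=4$ then $m\le 2=k$ is automatic. Thus $m\le k$ in every case, which is the assertion of Conjecture \ref{HMconjecture} for $K$.
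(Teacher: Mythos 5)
Your proof is correct and follows essentially the same route as the paper: a case split on $|\sigma(K)|\in\{0,2,4\}$, using the genus-two coefficient inequality from Ozsv\'ath--Szab\'o/Jong together with $\Delta_K(1)=\pm1$ to force $a_0<a_1$ (resp.\ strict increase up to the middle) in the $|\sigma|=2$ (resp.\ $\sigma=0$) case, the $|\sigma|=4$ case being automatic. The only substantive differences are that you re-derive the trapezoidal property and the inequality $a_1\geq 2a_0+\varepsilon_K$ from the staircase-plus-boxes decomposition of a thin complex (which is Petkova's theorem rather than a statement in \cite{OS03}, so you should either cite it or simply quote Corollary 1.6 of \cite{OS03} / Theorem 1.6 of \cite{Jon10} as the paper does), whereas the paper cites the trapezoidal property for genus-two alternating knots as already known and only verifies $m\leq k$.
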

\begin{proof}
Note since the trapezoidal conjecture is true for genus two alternating knots, only $m\leq k$ are left to be verified. If $|\sigma(K)|=4=2g(K)$, then Conjecture \ref{HMconjecture} is obviously true since the degree the symmetric Alexander polynomial is less than or equal to $g(K)$. If $|\sigma(K)|=2$, Corollary 1.6 of \cite{OS03} or Theorem 1.6 of \cite{Jon10} implies $a_1\geq 2a_0+1$, hence the conjecture. If $\sigma(K)=0$, Corollary 1.6 of \cite{OS03} or Theorem 1.6 of \cite{Jon10} implies $a_1\geq 2a_2$, and $\Delta_K(1)=1$ implies $a_0=1+2a_1-2a_2$, therefore $a_0>a_1>a_2$.
\end{proof}

Our main result confirms the Hirasawa-Murasugi conjecture for two-bridge knots.

\begin{thm}\label{main}
Conjecture \ref{HMconjecture} is true for two-bridge knots.
\end{thm}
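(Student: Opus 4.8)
The plan is to exploit the complete combinatorial control that continued fractions give over two-bridge knots, reducing the statement to an explicit assertion about a pair of integer recursions, and then to prove that assertion by induction on the length of the continued fraction. Write $K=\mathfrak b(p,q)$ with $p$ odd and choose the representative with $q$ even; then $p/q$ has a continued fraction expansion with all entries even, $p/q=[2c_1,2c_2,\dots,2c_{2g}]$ (subtraction convention), the $c_i$ being nonzero integers and $g=g(K)$. This presents $K$ as the boundary of a linear plumbing of $2g$ twisted bands, with respect to which a minimal genus Seifert surface has Seifert matrix $V$ equal to the bidiagonal matrix with diagonal $(c_1,\dots,c_{2g})$ and ones on the superdiagonal. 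Thus $\Delta_K(t)\doteq\det(tV-V^{T})=D_{2g}$, the last term of the continuant recursion $D_j=c_j(t-1)D_{j-1}+tD_{j-2}$ with $D_0=1$, $D_{-1}=0$; and $\sigma(K)=\operatorname{sign}(V+V^{T})$, computed from the sequence of leading principal minors $E_j=2c_jE_{j-1}-E_{j-2}$, $E_0=1$, $E_{-1}=0$, by the usual sign-change count. Hartley already proved the trapezoidal shape for two-bridge knots, so only the bound $m\le k$ remains, and both $m$ and $k$ are functions of the one integer sequence $(c_1,\dots,c_{2g})$, read off from $(D_j)$ and $(E_j)$ respectively.

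Next I would repackage the Alexander side to expose where the plateau comes from. The substitution $t=u^{2}$, $z=u-u^{-1}$ turns the $D$-recursion into $R_j=c_jzR_{j-1}+R_{j-2}$, whose last term is the Conway polynomial $\nabla_K(z)=\sum_{i=0}^{g}\gamma_i z^{2i}$, so that $\Delta_K(t)=\sum_{i=0}^{g}\gamma_i\,t^{g-i}(t-1)^{2i}$, where $\gamma_i$ is a signed count of monomer--domino tilings of a path on $2g$ vertices and $\gamma_0=1$. Expanding $\Delta_K$ in this form, the plateau equalities $a_g=a_{g-1}=\dots=a_{g-m}$ become a sequence of explicit linear relations among the $\gamma_i$ — for the first step the coefficients are Catalan numbers, $\sum_i(-1)^iC_i\gamma_i=0$ — one relation for each index up to $m$, while the signature is computed from $(E_j)$, which involve the same $c_i$ in a different combination. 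The inequality $2m\le|\sigma(K)|$ thus becomes the concrete assertion that the number of these linear relations the $\gamma_i$ can simultaneously satisfy is at most the number of sign agreements occurring in the sequence $(E_j)$.

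The heart of the proof would be an induction on $g$ that peels the continued fraction off one block at a time, a block being a maximal run $c_\ell,c_{\ell+1},\dots$ of entries of one sign — these are exactly the stretches on which both recursions behave monotonically. For each admissible block I would compute its effect on $\Delta_K$, tracking not merely $m$ but the whole profile of near-plateau coefficients, and its effect on the sign pattern of $(E_j)$, and show that deleting a block decreases $k$ by at least as much as it decreases $m$; the base cases $g\le 2$ are covered by Theorem \ref{genus two case} together with a direct check. The main obstacle is precisely that $m$ is not a local feature of the coefficient sequence of $\Delta_K$: removing a block can shorten or lengthen the plateau by several units, and this change is not visibly synchronised with the independently evolving sequence $(E_j)$. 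Overcoming this will require isolating the correct inductive invariant — rich enough to propagate the plateau data, yet structured so that the Alexander and signature recursions can be compared step by step — and I expect that designing and verifying this invariant, rather than any single computation, is where the real difficulty lies.
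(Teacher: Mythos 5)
There is a genuine gap, and you have in fact located it yourself: your final paragraph concedes that the inductive invariant needed to synchronise the evolution of the plateau length $m$ with the evolution of the signature has not been found, and that ``designing and verifying this invariant \dots is where the real difficulty lies.'' Everything before that point is setup. The reduction to the continuant recursions $D_j$ and $E_j$, the passage to the Conway coefficients $\gamma_i$, and the observation that the plateau equalities are linear relations among the $\gamma_i$ are all faithful reformulations of the problem, but none of them constitutes progress on the inequality $m\le k$: the assertion that ``deleting a block decreases $k$ by at least as much as it decreases $m$'' is exactly the theorem restated block-by-block, and no mechanism is offered that forces the two quantities to move together. In particular the one concrete identity you propose (the Catalan-coefficient relation for the first plateau step) is never connected to the sign-change count in $(E_j)$. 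As written, the proposal is a program, not a proof.

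For comparison, the paper fills precisely this hole with Hartley's \emph{bottom sequence} $\{b_i\}$ and its IH properties, which is the ``rich enough yet comparable'' invariant you were looking for. The identity $\alpha_i-\alpha_{i-1}=b_i-b_{l-i}$ converts plateau data into statements about $\{b_i\}$, while Shinohara's formula computes the signature from the same extended diagram. The induction is run not on blocks of the continued fraction but on the three moves $T_1,T_2,T_3$ of Prop.~\ref{inductible lemma}: $T_2$ and $T_3$ preserve both the radius and $|\sigma|$ outright, and the only delicate move is $T_1$, which raises $\sigma$ by exactly $1$ (Lemma~\ref{effect of T_1 on signature}) while the plateau's starting index either advances (harmless) or stalls; in the stalling case Prop.~\ref{radius change when there are stable terms} forces more than half of the $b_i$ to vanish, which by the structure of the moves forces a terminal run of $T_1$'s and hence, via Lemma~\ref{signature is less than the bredth of the alexander poly}, a signature large enough to absorb the extra unit of radius (Prop.~\ref{signature is positive when half of b_i vanish}). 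If you want to salvage your continued-fraction framework, you would need to prove an analogue of this ``stalling forces many trailing zeros, which forces large signature'' dichotomy for your block decomposition; without it the induction does not close.
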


The proof of this theorem is given in Section 3. For the strategy of the proof, we extend Hartley's induction argument in \cite{Har79}. Hartley's induction utilizes extended digrams of two-bridge knots to compute their Alexander polynomials, and for our purpose we further implement Shinohara's algorithm in the induction to compute the signature invariant \cite{Shi76}.\\

\noindent\textbf{Organization.} In Section 2 we recall the preliminaries, which includes computing the Alexander polynomial using extended diagrams and Shinohara's result on the signature of two-bridge knots. Section 3 is devoted for the induction argument: after some further technical preparation for the induction in Subsection 3.1 and Subsection 3.2, the key parts of the proof are carried out in three parallel steps in Subsection 3.3-3.5.\\

\noindent\textbf{Acknowledgment:} I thank Stephan Burton, Matt Hedden, Effie Kalfagianni and Christine Ruey Shan Lee for their interest, and Matt Hedden again for his help on improving the exposition of this work. The author is grateful to Kunio Murasugi for pointing out a mistake in an earlier version of this paper, and suggesting a correction for the proof of Theorem \ref{genus two case}.

\section{Preliminaries}
For the reader's convenience, we recall some preliminaries regarding two-bridge knots (and links) in this section. As it will be clear, all the two-bridge links we consider will come with a preferred orientation, so this allows us to talk about the signature of a two-bridge link without ambiguity. This section has three parts, consisting of the Schubert normal form, extended diagrams and Shinohara's method for computing the signature invariant. In particular, we shall see both the Alexander polynomial and the signature of a two-bridge link can be read off from its extended diagram. 

\textbf{Convention.\ }For the ease of terminology, by the term two-bridge knot we often include the case of links and this shall not cause any confusion. With this convention, Theorem \ref{main} can also be understood as: any two-bridge link with the preferred orientation specified below satisfies Conjecture 1 (see Theorem \ref{reformulation of the main theorem} for a precise reformulation). 

\subsection{Two-bridge knots and their Schubert normal forms}
A two-bridge knot is one that admits a bridge-presentation with two overarcs and two underarcs. Every two-bridge knot can be presented in its \emph{Schubert normal form}. More concretely, given a pair of coprime numbers $(p,q)$ such that $q$ is odd and $2p>q>0$, we may construct a two-bridge knot via following procedure. Firstly we draw two overarcs, placed horizontally on the same level, on which we mark $p+1$ points equidistantly, numbered from $0$ to $p$ with $0$ at the end near the center (see Fig.\ \ref{overarcs}). Then an underarc begins by spiralling out clockwisely from one of the $0$'s, passing under the two overarcs alternatively through the mark points $q$, $2q$, ... When reaching the outside, it makes a turn with a radius within $q/2$, and then spirals counterclockwisely, again passes through the overarcs alternatively under mark points with a $q$-unit difference. This process is repeated until the underarc reaches the tail of some overarc (Fig.\ \ref{(4,3) in Schubert normal form}). The other underarc is drew symmetrically. The so obtained diagram is called the Schubert normal form of the two-bridge knot of type $(p,q)$. Throughout, we orient thus obtained knots (or links) by requiring the orientation of overarcs to be center pointing.

\begin{figure}[htb]
\begin{minipage}[t]{0.5\linewidth}
\centering{
\fontsize{0.5cm}{2em}
\resizebox{65mm}{!}{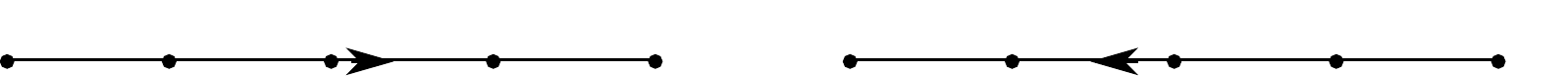}
\caption{$p=4$}
\label{overarcs}
}
\end{minipage}%
\begin{minipage}[t]{0.5\linewidth}
\centering{
\fontsize{0.5cm}{2em}
\resizebox{65mm}{!}{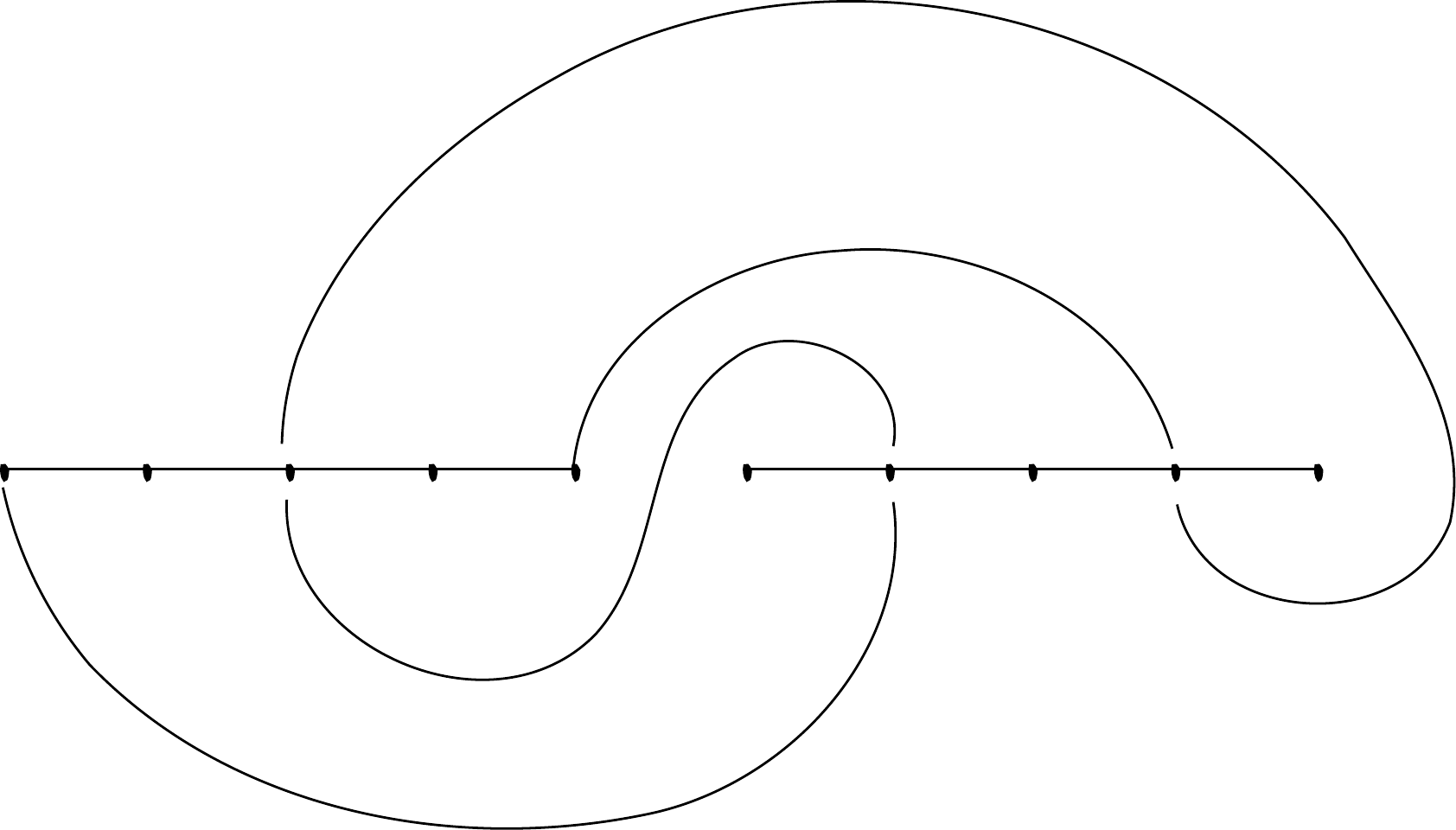}
\caption{$(4,3)$ with one underarc}
\label{(4,3) in Schubert normal form}
}
\end{minipage}%
\end{figure}
\subsection{Extended diagrams and the Alexander polynomial}
Closely related to the Schubert normal form of a two-bridge knot is its \emph{extended diagram}, introduced by Hartley \cite{Har79}. The extended diagram is obtained by unwinding the Schubert normal form: instead of drawing two overarcs horizontally, we draw a number of parallel overarcs, placed vertically, each one is marked off by numbers from $0$ to $p$ from the bottom to the top (see Fig.\ \ref{verticals}). Then starting from $0$ of one overarc, we let the underarc proceed from left to right if it were going clockwisely in the Schubert normal form, and going from right ot left if it were spiralling counterclockwisely (Fig.\ \ref{extended diagram for (4,3)}). 

The main advantage of the extended diagram presentation is that one could read off the (reduced) Alexander polynomial of the corresponding knot directly. To be precise, denote the overarcs which are hit by the underarc by $W_i$, with $i$ goes from $0$ to some number $l$ from left to right, and let $\alpha_i$ be the number of segments joining the $W_i$ and the $W_{i+1}$. By applying Fox calculus to the knot group presentation coming from the Schubert normal form, Hartley proved

\begin{thm}[\cite{Har79}]
 $\Delta(t)=\Sigma_{i=0}^{l-1}(-1)^i\alpha_i t^i$.
\end{thm}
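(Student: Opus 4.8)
The plan is to feed a group presentation read off the extended diagram into Fox's free differential calculus. The extended diagram is merely a straightened-out redrawing of the Schubert normal form — the visual overarcs $W_0,W_2,W_4,\dots$ are copies of one bridge and $W_1,W_3,\dots$ copies of the other — so it carries the usual two-generator one-relator presentation $\pi_1(S^3\setminus K)=\langle a,b\mid awb^{-1}w^{-1}\rangle$, where $a,b$ are meridians of the two overarcs and the relator word $w=b^{\epsilon_1}a^{\epsilon_2}b^{\epsilon_3}\cdots$ (strictly alternating, of length $p-1$) records syllable by syllable the successive crossings of the underarc, the exponents $\epsilon_j=(-1)^{\lfloor jq/p\rfloor}=\pm1$ being dictated by the height ($q,2q,3q,\dots$, folded into the strip $[0,p]$) at which the $j$-th crossing occurs. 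Equivalently one may start from the full Wirtinger presentation of the extended diagram — meridians $w_0,\dots,w_l$ of the hit overarcs together with one meridian per underarc segment — and eliminate the segment generators using the crossing relations (each conjugates one segment meridian into the next by an overarc meridian); this returns the same two-generator presentation while keeping the segments visible.

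I would then compute the Alexander polynomial as a Fox Jacobian minor. For a two-generator one-relator knot group the fundamental identity $\sum_x(\partial_x r)(x-1)=r-1$ forces $(\partial_a r)^{\mathrm{ab}}=-(\partial_b r)^{\mathrm{ab}}$, so up to units $\Delta(t)\doteq(\partial_b r)^{\mathrm{ab}}$ with $r=awb^{-1}w^{-1}$. Expanding $\partial_b r$ with the product rule $\partial_b(uv)=\partial_b u+u\,\partial_b v$ and the inverse rule $\partial_b(u^{-1})=-u^{-1}\partial_b u$, the only surviving terms come from the $b^{\pm1}$-syllables of $r$: the syllable in position $j$ contributes $(\text{prefix})\cdot(\pm1)$, which after abelianizing $a,b\mapsto t$ becomes one signed monomial $\pm t^{v_j}$ whose exponent $v_j$ is the image of that prefix — a partial sum of the $\epsilon$'s, shifted by the head $a$ and the central $b^{-1}$ of $r$ — and whose sign is $+$ for a $b^{+1}$-syllable and $-$ for a $b^{-1}$-syllable. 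Thus, up to a unit, $\Delta(t)$ is a sum of roughly $p$ signed monomials, one per crossing of the underarc.

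The heart of the proof, and the step I expect to be the main obstacle, is the bookkeeping that reorganizes this sum as $\sum_{i=0}^{l-1}(-1)^i\alpha_i t^i$; it forces one to read the geometry of the extended diagram carefully, and it is where the hypotheses $\gcd(p,q)=1$, $q$ odd and $2p>q>0$ are used. Three points must be established. First, that after normalizing by a power of $t$ and an overall sign the exponents $v_j$ that occur form exactly an interval $\{0,1,\dots,l-1\}$ of integers — this uses that the staircase traced by the underarc is monotone enough for consecutive crossings to move the running partial sum by $\pm1$ and for these partial sums to sweep out a full interval, so no power of $t$ is skipped. Second, that all monomials landing at a given exponent $i$ carry the same sign $(-1)^i$ (up to the global sign), so there is no cancellation and the coefficient at $t^i$ is $\pm(\text{their number})$ with that number at least $1$; this is a parity statement about the positions of the $b$-syllables relative to the running partial sum, true because $w$ strictly alternates between $a$ and $b$. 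Third, the geometric dictionary: the number of $b$-syllables of $r$ whose normalized exponent equals $i$ is precisely $\alpha_i$, the number of underarc segments joining the consecutive hit overarcs $W_i$ and $W_{i+1}$ — each such segment being exactly one pass of the underarc between those two overarcs, matching one crossing at level $i$. Granting these, summing the monomials gives $\Delta(t)=\sum_{i=0}^{l-1}(-1)^i\alpha_i t^i$; as a consistency check, evaluating at $t=1$ recovers $\Delta(1)=\pm1$ for a knot and $0$ for a two-component link.
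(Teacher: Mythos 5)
The paper does not actually prove this theorem: it is quoted from Hartley \cite{Har79}, with the one-sentence attribution that it follows ``by applying Fox calculus to the knot group presentation coming from the Schubert normal form.'' Your proposal takes exactly that route, and the scaffolding is sound: the two-generator one-relator presentation $\langle a,b\mid awb^{-1}w^{-1}\rangle$ with $w$ alternating of length $p-1$ and exponents $\epsilon_j=(-1)^{\lfloor jq/p\rfloor}$ is the standard one, the reduction of $\Delta$ to a single abelianized Fox derivative via the fundamental identity is correct, and the expansion into one signed monomial per $b$-syllable is right. Your points (i) and (ii) are also essentially complete as sketched: because every step of the running exponent is $\pm 1$, the set of exponents attained is an interval of integers, and the exponent has the same parity as the number of steps taken, so the sign of each monomial is determined by its exponent and no cancellation can occur.

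The genuine gap is your third point, which you yourself flag as ``the heart of the proof'' and then grant rather than prove: the identification of the multiplicity of $t^{i}$ with $\alpha_i$. As written, your dictionary (``each such segment being exactly one pass of the underarc between those two overarcs, matching one crossing at level $i$'') does not parse against the actual combinatorics of the extended diagram. The principal underarc has $p-1$ crossings with the overarcs, while the Fox expansion produces $p$ monomials and $\sum_i\alpha_i=p$ (this is forced, since $|\Delta(-1)|=p$ is the determinant), so ``one monomial per crossing'' is off by one; moreover the underarc is assembled from connecting arcs \emph{and} top/bottom loops, and the loops are segments of the underarc that join an overarc to itself and are counted in no $\alpha_i$. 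What has to be established is a bijection between the $p$ monomials (equivalently, the prefixes of the relator at its $b$-syllables) and the $p$ connecting arcs of the principal underarc, sending the monomial with normalized exponent $i$ to a connecting arc between $W_i$ and $W_{i+1}$, together with the verification that the loops contribute no additional monomials. That bookkeeping is precisely the content of Hartley's argument, and it is the part your proposal leaves undone.
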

For example, the two bridge link of type $(4,3)$ shown in Fig. \ref{extended diagram for (4,3)} has $\Delta(t)=2-2t$.

More technical results regarding extended diagrams will be needed for the proof of our main theorem, however, we defer that to Section 3 for the ease of reading. 

\begin{figure}[htb]
\begin{minipage}[t]{0.5\linewidth}
\centering{
\fontsize{0.5cm}{2em}
\resizebox{63mm}{!}{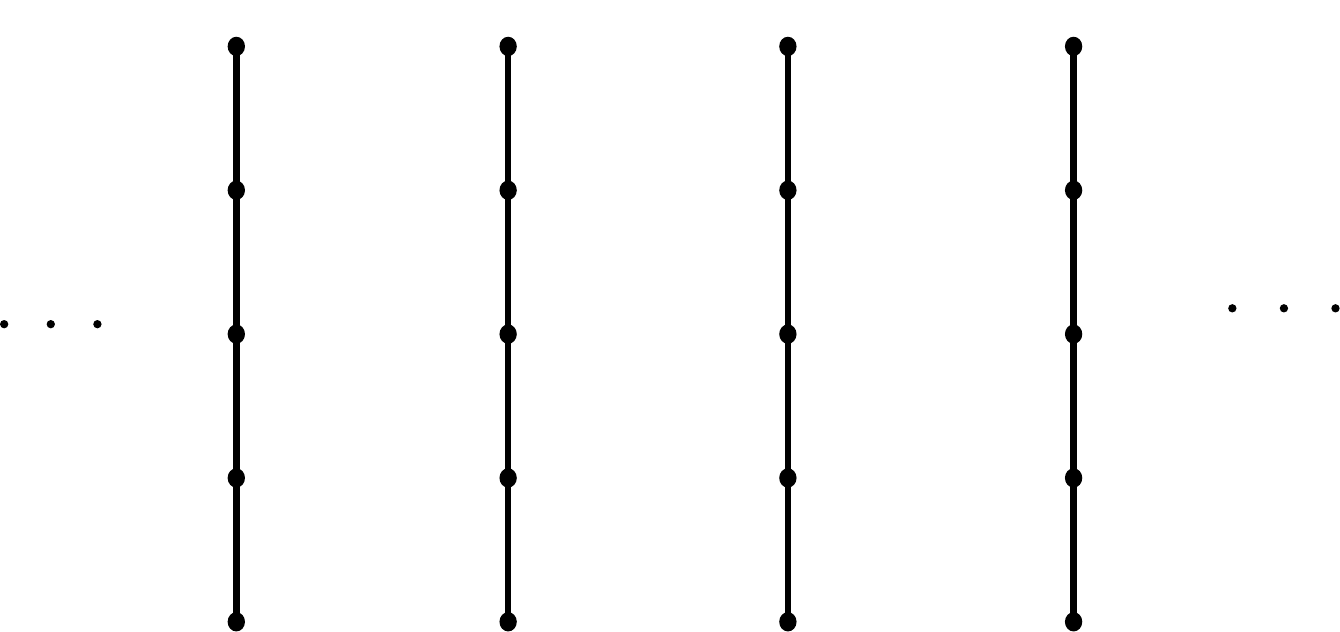}
\caption{overacrs in extended diagram}
\label{verticals}
}
\end{minipage}%
\begin{minipage}[!htb]{0.5\linewidth}
\centering{
\fontsize{0.5cm}{2em}
\resizebox{40mm}{!}{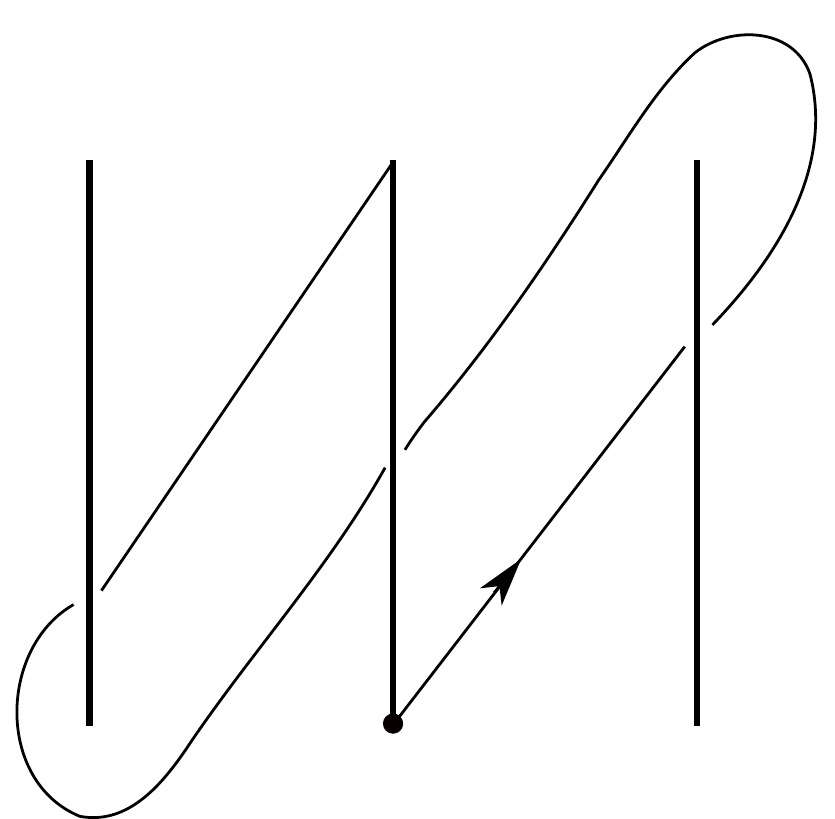}
\caption{Extended diagram for $(4,3)$: $\alpha_0=2$, $\alpha_1=2$, so the $(4,3)$-link has $\Delta(t)=2-2t$.}
\label{extended diagram for (4,3)}
}
\end{minipage}
\end{figure}

\subsection{The signature of two-bridge knots}
Shinohara gave a convenient way of computing the signature invariant of a two-bridge knot from its Schubert normal form \cite{Shi76}. Keep in mind that the knots (especially links) that we consider are oriented. We have
\begin{thm}[\cite{Shi76}]\label{Shinohara}
For a two-bridge knot $K$ of type $(p,q)$, its signature $\sigma(K)$ equals the algebraic sum of the signed crossings of one underarc with the overarcs in its Schubert normal form. 
\end{thm}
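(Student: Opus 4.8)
The plan is to compute $\sigma(K)$ from a Seifert surface adapted to the Schubert normal form, and then to match the resulting number against the signed crossing count by an induction on the Euclidean data of $(p,q)$. Because the Schubert diagram is alternating, Seifert's algorithm yields a minimal-genus Seifert surface $F$ whose first homology carries a natural basis, one generator per twist region, with the twist regions indexed by the partial quotients of the continued fraction expansion of $p/q$. First I would record the Seifert matrix $V$ in this basis: it is the standard bidiagonal continued-fraction matrix, with diagonal entries the band framings and off-diagonal entries $\pm 1$ arising from plumbing consecutive bands. Then $\sigma(K)=\operatorname{sig}(V+V^{\mathsf T})$, where $V+V^{\mathsf T}$ is symmetric tridiagonal; its signature is read off from the sign changes in the sequence of leading principal minors (the Jacobi/Sturm sign-change rule), producing a formula for $\sigma(K)$ in terms of the signs of the continued-fraction convergents. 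Note that this side of the computation is already a sum of few terms, consistent with $|\sigma(K)|$ being small.

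In parallel I would compute the algebraic sum $\tau$ of the signed crossings of a single underarc directly from the Schubert normal form. With the center-pointing orientation of the overarcs fixed, the sign of each crossing is determined by the travel direction of the underarc as it spirals, so I would first pin down this sign at each mark point $q,2q,3q,\dots$ in terms of the residues modulo $2p$. The essential feature is that these signs change frequently along the spiral, so $\tau$ is a heavily cancelled sum: the number of crossings grows with $p$ while $|\sigma(K)|$ stays small, so large positive and negative contributions must nearly cancel. The aim of this step is to express $\tau$, after cancellation, in the same continued-fraction terms that appear in the sign-change formula for $\sigma(K)$.

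The two computations would then be reconciled by induction on the Euclidean algorithm applied to $(p,q)$: one step of the algorithm, equivalently unwinding the outermost twist region, simplifies the continued fraction, the Seifert form, and the spiral simultaneously, and I would show that it changes $\operatorname{sig}(V+V^{\mathsf T})$ and $\tau$ by the same amount, with the base case an unknot or a Hopf-type link for which both quantities are manifestly equal. The main obstacle is precisely this matching, because neither $\sigma(K)$ nor $\tau$ is a local quantity: the signature is a global invariant of the quadratic form and is \emph{not} the sum of the signs of the diagonal entries, while $\tau$ is a sum with large internal cancellation. The induction must therefore be arranged to track both quantities through one and the same reduction and to keep the orientation conventions, hence every crossing sign, coherent at each step; one also has to verify that the relevant principal minors never vanish (using the nondegeneracy $\det(V+V^{\mathsf T})=\pm\Delta_K(-1)\neq 0$) so that the sign-change rule applies. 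A secondary but necessary point is the normalization by ``one underarc'': the two underarcs are interchanged by the symmetry of the construction and so carry equal signed sums, which is what makes $\tau$ equal to $\sigma(K)$ rather than to twice it or to a writhe-type total.
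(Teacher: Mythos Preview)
The paper does not prove this theorem at all: it is quoted from Shinohara \cite{Shi76} and used as a black box, with only the closed formula $\sigma(p,q)=\sum_{i=1}^{p-1}(-1)^{\lfloor iq/p\rfloor}$ noted in the subsequent remark as a consequence. So there is no ``paper's own proof'' to compare against; your proposal is an attempt to reprove a cited result.

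As for the proposal itself, the outline is reasonable but remains a plan rather than a proof. The two genuinely delicate points you correctly flag---that $\sigma$ is not the sum of diagonal signs and that $\tau$ involves massive cancellation---are exactly where the work lies, and the proposal does not yet carry them out. In particular: (i) the ``standard bidiagonal continued-fraction'' Seifert matrix for a two-bridge knot depends on which diagram you apply Seifert's algorithm to (the Schubert normal form versus the rational-tangle/Conway form), and you should fix this explicitly before invoking the Jacobi sign rule; (ii) the crossing-sign bookkeeping along the spiral is precisely where the formula $(-1)^{\lfloor iq/p\rfloor}$ comes from, and writing this down carefully already gives $\tau$ in closed form, after which the comparison with $\sigma$ can be done directly rather than by a parallel induction; (iii) your nondegeneracy remark $\det(V+V^{\mathsf T})=\pm\Delta_K(-1)\neq 0$ is fine for knots but needs adjustment for the two-component links that arise among two-bridge types (and which the paper explicitly includes). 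None of these is fatal, but each would have to be pinned down for the argument to go through.
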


In view of the relation between the Schubert normal form and the extended diagram, we may say $\sigma$ equals the algebraic sum of the signed crossings of an underarc with the overarcs in the extended diagram, with the overarcs oriented as downward pointing. For exmaple, the two-bridge knot of type $(4,3)$ has signature $1$ (Fig.\ \ref{(4,3) in Schubert normal form} and Fig.\ \ref{extended diagram for (4,3)}).

\begin{rmk} Denote by $\sigma(p,q)$ the signature of the two-bridge knot of type $(p,q)$, one can deduct the following well-known formula from Theorem \ref{Shinohara}:
$$\sigma(p,q)=\sum_{i=1}^{p-1}(-1)^{[\frac{iq}{p}]}.$$
\end{rmk}
\section{Proof the main theorem}
In this section we give a proof of the main theorem. Overall, the strategy is to carry out an induction on the pairs $(p,q)$, starting from $(1,1)$ using three types of moves $T_i$ that will be defined later, $i=1,2,3$. This is the approach Hartley took to prove the trapezoidal conjecture for two-bridge knots. To facilitate our proof, the first subsection recalls technical results regarding extended diagrams from \cite{Har79}. Then we move to examine the effect of each move $T_i$. Among these, the effect of the $T_1$ move is most subtle and requires a fair amount of technical care, hence the corresponding discussion will be postponed to the last subsection.
\subsection{More technical preparations on extended diagrams}
To carry out the induction, rather than restricting to a pair $(p,q)$ such that $2p>q>0$, Hartley introduced a bigger set consisting of the so-called admissible pairs. 
\begin{defn}
A pair of postive integers $(p,q)$ is said to be admissible if $gcd(p,q)=1$, and $q$ is odd.
\end{defn}
Note given an admissible pair $(p,q)$, we can similarly associate to it an extended diagram. More concretely, first introduce grid lines which consist of infinitely many parallel vertical lines, $W_i$, placed equidistantly, with subindex ranging from $-\infty$ to $\infty$, from left to right. On each grid line, mark $p+q$ points from $-(q-1)/2$ to $p+(q-1)/2$ with higher points having higher index (See Fig.\ \ref{gridlines}). The segments between $0$ and $p$ serve as the overarcs. Secondly, denote the point labeled by $j$ on $W_i$ by $x_{ij}$, and for all $i$, join the following pair of points by pairwise disjoint simple arcs lying within the region bounded by $W_i$ and $W_{i+1}$ (See Fig.\ \ref{arcs between two grid lines}): 
\begin{itemize}
\item $x_{ij}$ and $x_{i+1,j+q}$, where $-(q-1)/2\leq j \leq p-(q+1)/2$
\item  $x_{i+1,j}$ and $x_{i+1,-j}$ (called bottom loops), $x_{i,p-j}$ and $x_{i,p+j}$ (called top loops), where $1\leq j\leq (q-1)/2$
\end{itemize} 
After this, the above simple arcs piece up to give infinitly many underarcs (See Fig.\ \ref{infinitly many underarcs}). Arbitrarily pick a single underarc, by which we call \textit{the principal underarc}. Reindex the overarcs if necessary, so that leftmost overarc hit by the principal underarc is $W_0$. If the rightmost overarc hit by the principal underarc is $W_l$, we call $l$ to be the \textit{length} of $(p,q)$ (See Fig.\ \ref{infinitly many underarcs}).\\

There are two important sequence associated to the extended diagram of $(p,q)$. The first one is the \textit{arc sequence} $\alpha_i$, which is the number of arcs connecting $W_i$ and $W_{i+1}$ and coincides with the coefficients of the Alexander polynomial. The second one is the so-called \textit{bottom sequence} $b_i$, which is equal to twice the number of bottom loops of the principal underarc at $W_i$, plus one if the principal underarc starts at $W_i$ (sometimes we may consider $b_k$ with $k>l$, in this case $b_k$ should be understood as $0$). For example, in Fig.\ \ref{infinitly many underarcs} where the extended diagram of $(4,3)$ is shown, we see $l=2$, $b_0=2$, $b_1=1$ and $b_2=0$.   

\begin{figure}[htb]
\begin{minipage}[t]{0.5\linewidth}
\centering{
\fontsize{1.5cm}{2em}
\resizebox{45mm}{!}{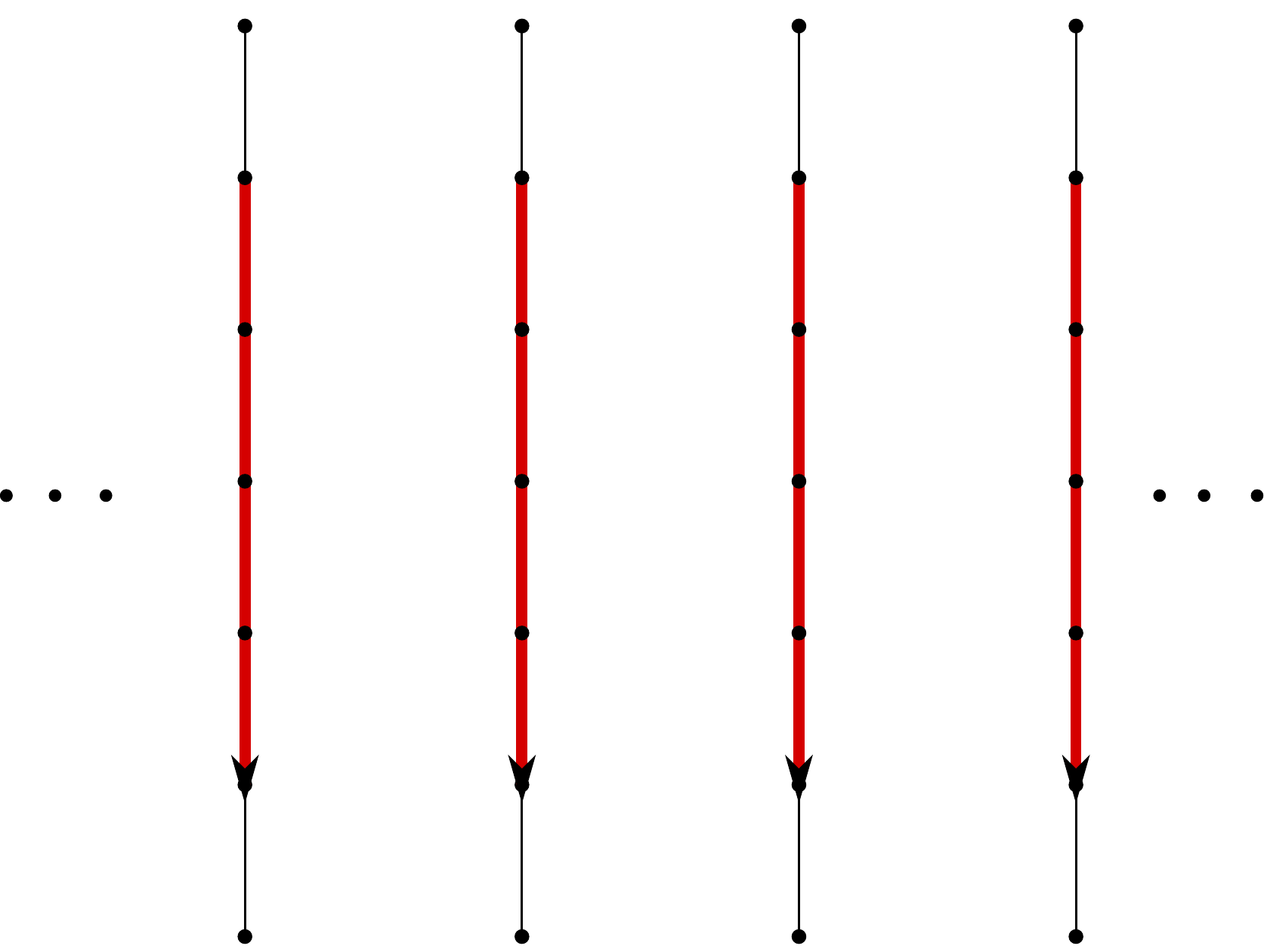}
\caption{grid lines for $(4,3)$}
\label{gridlines}
}
\end{minipage}%
\begin{minipage}[!htb]{0.5\linewidth}
\centering{
\fontsize{0.5cm}{2em}
\resizebox{40mm}{!}{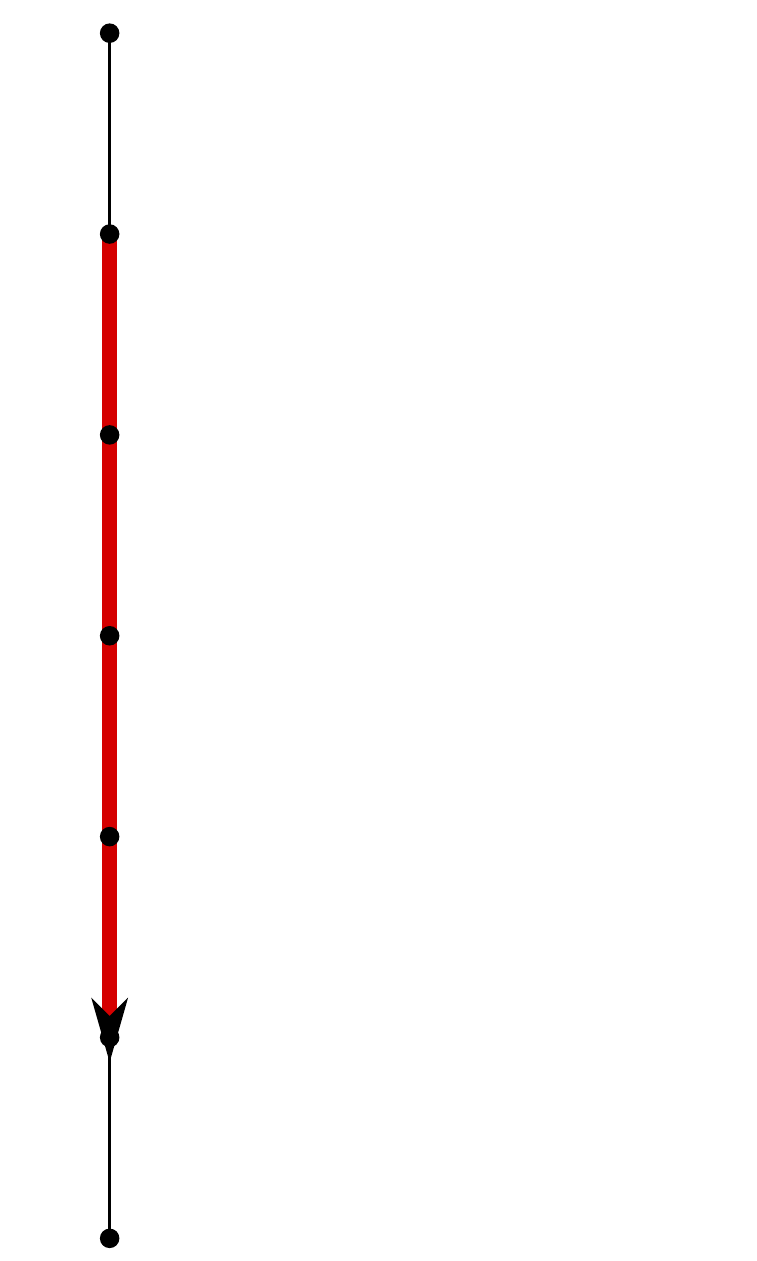}
\caption{arcs between two grid lines}
\label{arcs between two grid lines}
}
\end{minipage}
\end{figure}

\begin{figure}[htb]
\centering{
\fontsize{1cm}{2em}
\resizebox{85mm}{!}{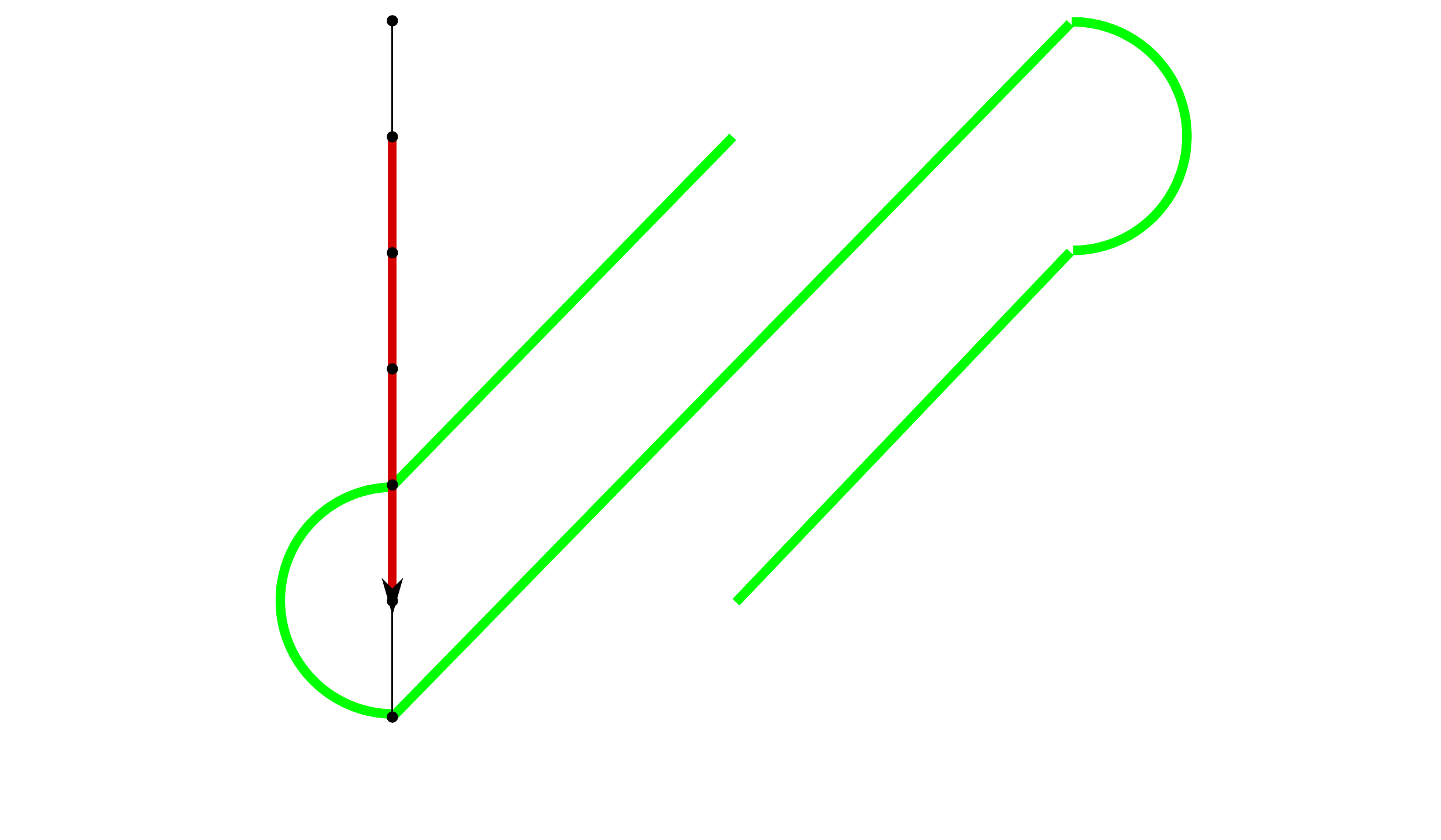}
\caption{Revisiting the extended diagram for $(4,3)$: the thickened (and green) underarc is the principle underarc; the length of  $(4,3)$ is $2$.}
\label{infinitly many underarcs}
}
\end{figure}

Key technical results regarding $\alpha_i$ and $b_i$ are summarized below.
\begin{prop}[\cite{Har79}]
Let $(p,q)$, $\alpha_i$ and $b_i$ be as above. Then  $$\alpha_i-\alpha_{i-1}=b_i-b_{l-i},\ 1\leq i \leq l.$$ Moreover, $\{b_i\}$ satisfies the following three so-called IH properties:
\begin{itemize}
\item[(IH1)] There is an interger $h$ satisfying $1\leq h \leq l$ and an integer $r\leq h$ such that $b_i=0$ when $i>h$, and $0\leq S_0< S_1<\cdots<S_r=S_{r+1}=\cdots=S_h$, where $S_{2j}=b_{h-j}$ and $S_{2j+1}=b_j$. 
\item[(IH2)] If $h^*\geq h$ and $2j\leq h^*$, then $b_j\geq b_{h^*-j}$.
\item[(IH3)] If $0\leq i<j$ and $b_i=b_j$, then $b_i=b_k=b_j$ for all $k$ such that $i\leq k \leq j$.
\end{itemize}
\end{prop}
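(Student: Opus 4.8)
The plan is to follow Hartley's analysis of the extended diagram \cite{Har79}, of which this proposition is essentially a distillation. The statement has two parts: the linear identity $\alpha_i-\alpha_{i-1}=b_i-b_{l-i}$, and the three ``IH'' properties. It is natural to read ``IH'' as ``inductive hypothesis'': the purpose of (IH1)--(IH3) is to form a package of facts about $\{b_i\}$ strong enough to propagate along an induction on the admissible pair $(p,q)$, whereas a weaker statement would not close. Of the three, only (IH1) carries real content; I will indicate at the end why (IH2) and (IH3) are formal consequences of (IH1) together with $b_i\ge 0$. So the work is to prove the identity and (IH1).

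For the identity I would argue directly on the extended diagram of $(p,q)$. Fix $i$ with $1\le i\le l$. The arcs of the principal underarc in the fundamental strip between $W_{i-1}$ and $W_i$ (there are $\alpha_{i-1}$ of these) and in the strip between $W_i$ and $W_{i+1}$ (there are $\alpha_i$ of these) are each of one of three types: a rising segment crossing both bounding grid lines, a top loop based at the left grid line, or a bottom loop based at the right grid line. Counting, on the grid line $W_i$, the endpoints of the arcs lying to its left against the endpoints of the arcs lying to its right --- and using that the principal underarc is embedded, so that every point of it on $W_i$ other than at most one of the two terminal points of the underarc is a common endpoint of exactly one arc from each side --- yields a local linear relation among $\alpha_{i-1}$, $\alpha_i$, the number of bottom loops based at $W_i$, and the number of top loops based at $W_i$, the terminal point being absorbed by the ``$+1$ if the underarc starts there'' term in the definition of $b_i$. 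To bring this into the asserted form I would invoke the $180^{\circ}$ rotational symmetry of the extended diagram: rotation reverses the direction of travel along the principal underarc, sends $W_j$ to $W_{l-j}$, and interchanges top loops with bottom loops, hence identifies the top-loop count based at $W_i$ with the bottom-loop count based at $W_{l-i}$ and matches the two terminal-point contributions; substituting gives $\alpha_i-\alpha_{i-1}=b_i-b_{l-i}$. The boundary identities $\alpha_0=b_0$ and $\alpha_l=b_l=0$ are checked separately in the same spirit. This part is pure bookkeeping; the only delicate points are the behaviour at the two terminal points of the underarc and at the degenerate top and bottom loops based at $W_0$ and $W_l$.

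The core is (IH1). I would prove it by induction along the three moves --- call them $T_1,T_2,T_3$, introduced below --- which generate every admissible pair starting from $(1,1)$, carrying the whole package (the identity, (IH1), and hence (IH2)--(IH3)) along; this is also the induction used for the main theorem, so setting it up here costs nothing extra. One checks $(1,1)$ by hand, and then, for each move, computes the effect on the length $l$ and on the entire sequence $\{b_i\}$ and verifies that the interleaved list $b_h,\,b_0,\,b_{h-1},\,b_1,\,b_{h-2},\,b_2,\dots$ stays strictly increasing and then constant, with the vanishing range $i>h$ tracked correctly. The structural reason such a rigid profile survives is that the trajectory of the principal underarc is a billiard-type path whose reflections off the top and bottom of the strip are governed by the Euclidean algorithm applied to $(p,q)$: the bottom loops are deposited in a nested fashion as the underarc makes its successive sweeps, and the interleaving in (IH1) is exactly the order in which those sweeps accumulate the loops. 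I expect the main obstacle to lie precisely here --- following the interleaved list through the move that increases the length of $(p,q)$ and re-establishing ``strictly increasing, then constant'' --- which is the delicate combinatorial heart of the argument.

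Finally, (IH2) and (IH3) follow from (IH1). Unwinding $S_{2k}=b_{h-k}$, $S_{2k+1}=b_k$ and the monotonicity $S_0\le S_1\le S_2\le\cdots$ shows that $\{b_i\}$ is unimodal --- strictly increasing, then constant at its maximum, then strictly decreasing, then identically $0$ --- and that no value strictly below the maximum is attained on both the increasing and the decreasing stretch (two such occurrences would force two $S$-indices of opposite parity to be equal); hence every level set is an interval, which is (IH3). And $b_j\ge b_{h^*-j}$ whenever $h^*\ge h$ and $2j\le h^*$: this is trivial when $h^*-j>h$, and otherwise reduces to a comparison of two $S$-indices in which the hypothesis $h^*\ge h$ rules out the one configuration --- $j$ on the increasing stretch and $h^*-j$ on the plateau --- that could reverse the inequality, which is (IH2).
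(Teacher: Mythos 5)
The paper does not actually prove this proposition: it is quoted from Hartley \cite{Har79}, and the only indication of its proof is the later remark that ``the IH properties are proved inductively using these $T_i$ moves.'' Your architecture does match Hartley's: the identity via a count of arc-endpoints of the principal underarc on $W_i$ combined with the $180^{\circ}$ symmetry of the extended diagram, and (IH1) by induction along the moves generating all admissible pairs from $(1,1)$. Your reduction of (IH2) and (IH3) to (IH1) plus $b_i\geq 0$ is also correct and is a genuinely useful observation: the interleaving $b_h<b_0<b_{h-1}<b_1<\cdots$ forces every value strictly below the common maximum to be attained at exactly one index in $\{0,\dots,h\}$, which gives (IH3), and the case analysis you indicate for (IH2) does close.

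However, the proposal stops exactly where the content of (IH1) begins, and moreover mislocates that content. You flag ``the move that increases the length of $(p,q)$'' as the expected obstacle; but that move is $T_1:(p,q)\mapsto(p+q,q)$, which by Prop.\ \ref{change of bottom sequence} leaves the bottom sequence unchanged and merely increments $l$, so (IH1)--(IH3) pass through it with no work at all (the same $h$ still satisfies $h\leq l<l+1$). The real inductive work is in $T_2$ and $T_3$, where the new bottom sequence is of the form $2\alpha_i\pm b_{\cdot}$: re-establishing the interleaved chain $S_0<S_1<\cdots<S_r=\cdots=S_h$ for this new sequence requires combining (IH1) for $b$ with the monotonicity of $\alpha$ (itself extracted from the identity and the inductive hypothesis) and identifying the new $h$ and $r$. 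None of this is carried out; the ``billiard path / Euclidean algorithm'' picture is a heuristic, not an argument. The endpoint count for the identity is likewise left at the level of a plan --- the terminal-point bookkeeping and the degenerate loops at $W_0$ and $W_l$ are precisely where sign errors would hide --- though that part I agree is routine. As it stands the proposal is a correct outline of Hartley's proof with its combinatorial core missing.
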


Now let us introduce the $T_i$ moves that we promised in the beginning of this section. These are operations on the admissible pairs defined as: 
\begin{align*}
&T_1:(p,q)\longmapsto (p+q,q)\\
&T_2:(p,q)\longmapsto (p,2p+q)\\
&T_3:(p,q)\longmapsto (p,2p-q),
\end{align*}
where $T_3$ is only defined when $p>q$, hence $T_3$ cannot be applied after $T_2$ or $T_3$, but only after $T_1$. One nice feature of the $T_i$ moves is the following.
\begin{prop}[\cite{Har79}]\label{inductible lemma}
Any admissible pair $(p,q)$ can be obtained from $(1,1)$ via applying a sequence of $T_i$, $i=1,2,3$.
\end{prop}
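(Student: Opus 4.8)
The plan is to prove Proposition~\ref{inductible lemma} by strong induction on the quantity $N = p+q$, running the three moves backwards. The base case is $N = 2$: the only admissible pair with $p+q = 2$ is $(1,1)$, which is obtained from itself by the empty sequence. For the inductive step, fix an admissible pair $(p,q)$ with $p+q > 2$. Since $\gcd(p,q) = 1$, the pair cannot be $(1,1)$, and in fact $p \neq q$ (equality would force $p = q = \gcd(p,q) = 1$). I will split into cases according to how $p$ and $q$ compare, and in each case exhibit an admissible predecessor $(p',q')$ with $p' + q' < p + q$ and $(p,q) = T_i(p',q')$ for a suitable $i$. Applying the induction hypothesis to $(p',q')$ and then prepending $T_i$ to the resulting sequence completes the step.

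The cases are as follows. \textbf{Case A:} $p > q$. Take $(p',q') = (p-q,\,q)$; then $p - q \geq 1$, the second coordinate $q$ is still odd, and $\gcd(p-q,q) = \gcd(p,q) = 1$, so $(p',q')$ is admissible, its sum is $p < p+q$, and $T_1(p-q,q) = (p,q)$. \textbf{Case B:} $p < q$ and $q > 2p$. Take $(p',q') = (p,\,q-2p)$; then $q - 2p$ is a positive odd integer coprime to $p$, so $(p',q')$ is admissible, its sum is $q - p < p+q$, and $T_2(p,q-2p) = (p,q)$. \textbf{Case C:} $p < q < 2p$. Take $(p',q') = (p,\,2p-q)$; then $2p - q$ is odd, coprime to $p$, and $1 \leq 2p - q < p$ (the strict upper bound because $q > p$), so $(p',q')$ is admissible; its sum is $3p - q$, which is strictly less than $p + q$ precisely because $p < q$; and since $p > 2p - q$ the move $T_3$ is defined on $(p,2p-q)$, with $T_3(p,2p-q) = (p,q)$. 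Because $q \neq 2p$ (parity of $q$) and $p \neq q$, these three cases are exhaustive, so the induction goes through.

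This argument is essentially the Euclidean algorithm in disguise, so I do not anticipate a deep obstacle; the points that need care are (i) verifying at each step that admissibility is preserved, i.e.\ that coprimality survives subtracting a multiple of one coordinate from the other and that the oddness of the second coordinate is maintained (immediate from the parity bookkeeping above, using that $2p$ is even), (ii) checking that the domain restriction on $T_3$ (first coordinate strictly larger than the second) is met in Case~C, and (iii) choosing $N = p+q$ rather than, say, $\max(p,q)$ as the induction quantity, so that the reverse $T_3$ step in Case~C also strictly decreases it. One could instead phrase the descent as a walk down the Stern--Brocot tree, but the direct case analysis above seems cleanest.
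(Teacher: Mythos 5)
Your argument is correct: the reverse Euclidean descent on $p+q$ works, and you have checked the three points that actually need care (admissibility of each predecessor, the domain condition $p'>q'$ for $T_3$ in Case~C, and the strict decrease of $p+q$ in every case, including the $T_3$ case where the decrease is exactly $2(q-p)>0$). The paper itself offers no proof of this proposition --- it is quoted from Hartley \cite{Har79} --- and your descent is essentially the standard argument behind Hartley's result, so there is nothing further to reconcile.
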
 

In fact, the IH properties are proved inductively using these $T_i$ moves, and they imply the trapezoidal conjecture for two-bridge knots. For our purpose, the following facts will be important.
\begin{prop}[\cite{Har79}]\label{change of bottom sequence}
Given an admissible pair $(p,q)$, let $l$, $b_i$ and $\alpha_i$ denote the length, bottom sequence and the arc sequence respectively. Then the length and bottom sequence of $T_i(p,q)$, $i=1,2,3$ are summarized in the following table 
\begin{center}
\begin{tabular}{|c|c|c|}
\hline
 & length $l'$ & bottom sequence $b_i'$  \\
\hline
$T_1(p,q)$ & $l+1$ & $b_i$ \\
\hline
$T_2(p,q)$ & $l$ & $2\alpha_i+b_i$ \\
\hline
$T_3(p,q)$ & $l$ & $2\alpha_i-b_i$ \\
\hline
\end{tabular}
\end{center}
\end{prop}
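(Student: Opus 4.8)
The plan is to establish the three rows of the table by directly analyzing how the extended diagram changes under each move, carrying along at every grid line $W_i$ the following local data: the set of marked points through which the principal underarc passes, together with the \emph{type} of each such passage --- the endpoint of a top loop, the endpoint of a bottom loop, or the end of a diagonal segment running left or right. From this data one reads off $\alpha_i$ (the number of diagonal segments in the column immediately to the right of $W_i$) and $b_i$ (twice the number of bottom-loop pairs at $W_i$, plus the correction coming from the starting point of the principal underarc), so it is enough to see how each $T_i$ transforms it.

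For $T_1\colon(p,q)\mapsto(p+q,q)$ the decisive point is that $q$ does not change: the bottom loops, which only ever involve the marks between $-(q-1)/2$ and $(q-1)/2$, and the diagonal segments below height $p$ are literally unchanged, while the whole modification is confined to a band of height $q$ inserted just below the new top mark $p+q$. In that band the old top loops at heights $p-j\leftrightarrow p+j$ are replaced by top loops at heights $p+q-j\leftrightarrow p+q+j$ together with one additional run of diagonal segments crossing the inserted band. I would argue that this insertion causes the principal underarc to hit exactly one more overarc --- so the length goes from $l$ to $l+1$ --- while creating no new bottom loops and not displacing the underarc's starting point relative to the reindexed grid lines, whence $b_i'=b_i$ for all $i$.

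For $T_2\colon(p,q)\mapsto(p,2p+q)$ and $T_3\colon(p,q)\mapsto(p,2p-q)$ the overarc width $p$ is fixed but the vertical winding changes. Here I would set up a \emph{folding identification}: the stretch of the underarc of $(p,2p+q)$ between two consecutive grid lines is obtained from that of $(p,q)$ by reflecting across the overarc and, at each $W_i$, inserting one new bottom loop for each of the $\alpha_i$ arcs in the column to the right of $W_i$, every such loop being folded down below height $0$; since each new loop has two endpoints this gives $b_i'=2\alpha_i+b_i$. For $T_3$ the reflection is performed in the opposite sense --- subtracting rather than adding the reflected strands, which is precisely why $p>q$ must be imposed, so that $(p,2p-q)$ is still an admissible pair with positive $q$-value and the reflected picture is well-defined --- and this yields $b_i'=2\alpha_i-b_i$. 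In both cases no overarc is gained or lost, so $l'=l$; one still has to check that the starting point of the principal underarc moves consistently, so that the ``$+1$'' contributions to $b_i$ and $b_i'$ occur at the same index.

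The main obstacle, as is typical with extended-diagram arguments, is not any single computation but the bookkeeping: making the ``insert a band of height $q$'' and ``fold across the overarc'' identifications precise at the level of the combinatorial diagram, reindexing the overarcs so that $W_0$ is again the leftmost one hit by the principal underarc, and correctly tracking the starting point of the principal underarc --- the source of the ``$+1$'' in the definition of $b_i$ --- through each move. I would organize the verification case by case on which $T_i$ is applied, dispose of the small base pairs by hand, and use the relation $\alpha_i-\alpha_{i-1}=b_i-b_{l-i}$ together with the IH properties as running consistency checks on the transformed sequences.
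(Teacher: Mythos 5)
The paper does not prove this proposition; it is imported verbatim from Hartley \cite{Har79}, so there is no in-paper argument to compare yours against. Your plan --- a direct analysis of how the extended diagram transforms under each $T_i$ --- is the right one and is essentially what Hartley does, and your $T_1$ discussion is basically sound: the bottom loops and the low diagonals depend only on $q$ and the marks near $0$, so they are untouched, and the entire change lives in the top band. (The paper's own dual picture, used later in the proof of Lemma \ref{effect of T_1 on signature}, is to slide the bottom ends of the overarcs one unit to the right; either view works.) But even here the one claim that carries content --- that the principal underarc hits \emph{exactly one} more overarc, appended on the side of index $l+1$ so that no reindexing shift occurs --- is asserted rather than derived; it requires tracing the underarc through the inserted band and checking where its mark-$p$ endpoint lands.

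The genuine gap is in the $T_2$/$T_3$ bookkeeping you defer, because that is where the mathematics lives. For $T_2$, the folding does create one new bottom loop per connecting arc (the $2\alpha_i$ term), but it also swaps the two ends of the principal underarc and hence reverses the direction in which it sweeps the grid lines; the old bottom loops at $W_i$ therefore reappear at index $l-i$, not $i$. A direct check on $T_2(1,1)=(1,3)$ shows this: $(1,1)$ has $b=(1,0)$ and $\alpha_0=1$, while $(1,3)$ has $b'=(2,1)$, which is $2\alpha_i+b_{l-i}$ and not $2\alpha_i+b_i=(3,0)$. (This is also the formula the paper actually uses in Subsection 3.3.) Your sketch, which keeps the old loops ``in place,'' would not surface this reversal, and since $b_i$ is generally not a palindromic sequence the distinction matters. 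For $T_3$, ``subtracting the reflected strands'' is not a well-defined operation on a diagram; the workable route is the one the paper hints at in Subsection 3.5, namely that $(p,2p-q)\sim(p,-q)$ is the mirror of $(p,q)$, which turns the diagram upside down and exchanges top and bottom loops, after which $b_i'=2\alpha_i-b_i$ follows from a counting identity relating the numbers of top loops, bottom loops, and diagonal endpoints at each grid line. I would also drop the suggestion of using $\alpha_i-\alpha_{i-1}=b_i-b_{l-i}$ and the IH properties as ``consistency checks'': those facts are themselves proved \emph{from} this proposition, so leaning on them risks circularity.
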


\subsection{Reformulation of the main theorem}
Notice that an admissible pair $(p,q)$ may give rise to a two-component link. Therefore during this process of induction, both the degree of the Alexander polynomial and the signature may change their parity, so we would like to adjust the statement of the Hirasawa-Murasugi conjecture to take care of this issue.
\begin{thm}\label{reformulation of the main theorem}
Let $(p,q)$ be an admissible pair, $\sigma$ be its signature, and $\Delta_K(t)=a_0-a_1t+\cdots+(-1)^{l-1}a_{l-1}t^{l-1}$ be its Alexander polynomial, where $a_i>0$, $i=0,...,l-1$. Then
\begin{equation}\label{Fox inequality}
a_0<a_1<\cdots<a_{i_0-1}=a_{i_0}=\cdots=a_{l-i_0}>a_{l-i_0+1}>\cdots>a_{l-1}.
\end{equation}
Moreover, 
\begin{equation}\label{HM inequality}
\lfloor \frac{|\sigma|+1}{2}\rfloor \geq \lfloor \frac{l-2(i_0-1)}{2}\rfloor.
\end{equation}
\end{thm}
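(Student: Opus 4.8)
The plan is to prove Theorem~\ref{reformulation of the main theorem} by induction on the length of a shortest word in $T_1,T_2,T_3$ realizing $(p,q)$ from $(1,1)$, which is legitimate by Proposition~\ref{inductible lemma}; the base case $(1,1)$ is the unknot, for which $l=1$, $\Delta(t)=1$, $\sigma=0$, and both \eqref{Fox inequality} and \eqref{HM inequality} are trivial. For the inductive step, the trapezoidal inequality \eqref{Fox inequality} follows from the IH properties of the bottom sequence, which Hartley already established inductively along the very same moves, so the genuinely new content is \eqref{HM inequality}. I would therefore carry a strengthened inductive hypothesis that records, in addition to the IH properties of $\{b_i\}$, a precise relationship between $\sigma$, the bottom sequence $\{b_i\}$ and the arc sequence $\{\alpha_i\}$ --- in particular the sign of $\sigma$ and its size relative to the indices $h$ and $r$ appearing in (IH1) --- chosen so that it survives each of the three moves and specializes to \eqref{HM inequality}.

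\textbf{Encoding the signature in the extended diagram.} The first technical step is to rewrite Shinohara's theorem (Theorem~\ref{Shinohara}) purely in terms of extended-diagram data. By the remark following Theorem~\ref{Shinohara}, $\sigma$ is the algebraic sum of the signed crossings of the principal underarc with the downward-oriented overarcs $W_0,\dots,W_l$. The crossings at a fixed $W_i$ are organized by the bottom loops, the top loops and the through-strands meeting $W_i$, and the sign of each crossing is determined by whether the underarc is locally travelling left-to-right or right-to-left; I would package this into a local contribution $\sigma_i$ with $\sigma=\sum_i\sigma_i$ and express $\sigma_i$ in terms of $b_i$ (the top loops being governed by the $b_j$ through the top/bottom symmetry of the diagram, and the through-strand count being $\alpha_i$, which is itself determined by $\{b_i\}$ via $\alpha_i-\alpha_{i-1}=b_i-b_{l-i}$ and an initial condition). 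The upshot is a closed formula for $\sigma$ in terms of $(l,\{b_i\})$ that interacts cleanly with Proposition~\ref{change of bottom sequence}.

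\textbf{The moves $T_2$ and $T_3$.} For $T_2$ and $T_3$ the length is unchanged while $b_i\mapsto 2\alpha_i\pm b_i$. Plugging this into the signature formula of the previous step and using the recursion for $\alpha_i$, I would compute the new signature $\sigma'$ and the new stabilization index $i_0'$ (read off from where the new arc sequence becomes constant, equivalently where $b_i'=b'_{l-i}$). Since the new bottom sequence is, up to the old $b_i$, proportional to the arc sequence, I expect $|\sigma'|$ to grow at least as fast as the stable width $l-2(i_0'-1)$, so that \eqref{HM inequality} persists; this should reduce to a finite case analysis keyed to the IH properties, with the two-component cases --- where degrees and signatures change parity along the induction --- absorbed by the floor functions in \eqref{HM inequality}.

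\textbf{The move $T_1$, and the main obstacle.} The hard part is $T_1$, as Hartley's analysis already foreshadows: here $l\mapsto l+1$ but the bottom sequence is literally unchanged, so the arc sequence gets re-indexed, a new outermost coefficient appears, and the stable width $l-2(i_0-1)$ may jump; correspondingly the principal underarc acquires crossings with one additional overarc near the new end, shifting $\sigma$ by a term that must be pinned down from the local picture there. The obstacle is to show that this shift exactly compensates the change in the stable width. I expect this to require feeding the properties (IH1)--(IH3) back in to locate precisely where the re-indexed arc sequence stabilizes and to determine the sign of the new crossing contribution, and to split into subcases according to the position of the new end relative to $h$ and $r$ in (IH1) --- which is why the treatment of $T_1$ warrants its own subsection.
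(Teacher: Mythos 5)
Your overall frame --- induction along the $T_i$ moves via Proposition~\ref{inductible lemma}, with \eqref{Fox inequality} inherited from Hartley and $T_1$ singled out as the hard case --- matches the paper. But there is a genuine gap in how you set up the $T_1$ step, and it is exactly where the real difficulty of the theorem lives. The signature change under $T_1$ is not a subtle quantity to be ``pinned down from the local picture'': it is always exactly $+1$ (one new positive crossing from sliding the starting overarc; the two new crossings created by each bottom loop cancel in pairs). Consequently your stated obstacle --- ``show that this shift exactly compensates the change in the stable width'' --- is the wrong target. The problem is that adding $1$ to $\sigma$ increases $\lfloor\frac{|\sigma|+1}{2}\rfloor$ only when $\sigma\geq 0$; if $\sigma<0$ the bound on the left of \eqref{HM inequality} can \emph{decrease} while the stable width grows. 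So in the dangerous case --- where $i_0'=i_0$ and hence $\lfloor\frac{l'-2(i_0'-1)}{2}\rfloor$ goes up --- you must prove the additional fact that $\sigma(p,q)\geq 0$. Nothing in your proposal supplies this, and it does not follow from the IH properties of $\{b_i\}$ alone in any direct way.

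The paper's route to that missing fact is structural rather than formulaic: it shows that $i_0'=i_0$ forces $b_{i_0}=\cdots=b_l=0$ (more than half the bottom sequence vanishes), that a pair with that many terminal zeros can only arise as $T_1^{l-i_0}(p',q')$ (because $T_2$ and $T_3\circ T_1$ produce bottom sequences with at most one zero term), and then combines the a priori bound $|\sigma|\leq l-1$ (itself proved by the same induction) with $\sigma(T_1(p,q))=\sigma(p,q)+1$ applied $l-i_0$ times to get $\sigma(p,q)\geq l-2i_0\geq 0$. Your proposed ``closed formula for $\sigma$ in terms of $(l,\{b_i\})$'' is not established in the paper and is not needed: for $T_2$ and $T_3$ the paper simply uses $\sigma(p,q)=\sum_{i=1}^{p-1}(-1)^{\lfloor iq/p\rfloor}$ to see that $|\sigma|$ is unchanged (as is the radius, by the computation you correctly sketch), so those steps are easier than you anticipate. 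To repair your argument you would need to add, at minimum, the dichotomy $i_0'\in\{i_0,i_0+1\}$ with the vanishing of $b_{i_0},\dots,b_l$ in the first branch, and a proof that this vanishing forces $\sigma\geq 0$.
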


Here $\lfloor \cdot \rfloor$ is understood as taking the maximal integer part. It is obvious that the above theorem implies Theorem \ref{main}.
\begin{proof}
Note Inequality (\ref{Fox inequality}) is already proved by Hartley, so in the rest of this section, we will focus on proving Inequality (\ref{HM inequality}). To do that, we begin by noticing it is obviously true for the pair $(1,1)$. In view of Prop.\ \ref{inductible lemma}, we just need to see that if a pair $(p,q)$ satisfies Theorem \ref{reformulation of the main theorem}, so is $T_i(p,q)$ for $i=1,2,3$. This is done in subsections 3.3-3.5.
\end{proof}

\begin{rmk}
From now on, we call $\lfloor \frac{l-2(i_0-1)}{2}\rfloor$ the radius of the stable terms of the Alexander polynomial. Note that the case in which all coefficients are distinct could happen, and in that case, $i_0-1=l-i_0$, which implies the radius is zero.
\end{rmk}

\subsection{The effect of $T_2$ move}
This subsection is devoted to proving the following statement.
\begin{prop}
If Theorem \ref{reformulation of the main theorem} is true for an admissible pair $(p,q)$, then it is true for $T_2(p,q)$.
\end{prop}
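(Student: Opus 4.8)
The plan is to translate the hypotheses and conclusion into statements about the bottom sequence $b_i$ and then use the transformation formula from Proposition \ref{change of bottom sequence}. Under $T_2$ the length is unchanged, $l' = l$, and the new bottom sequence is $b_i' = 2\alpha_i + b_i$. Since $T_2$ can change the parity of $|\sigma|$, the first thing I would do is understand precisely how $\sigma$ changes under $T_2$. Geometrically, $T_2: (p,q)\mapsto(p,2p+q)$ inserts one extra full wrap of the underarc around the overarcs; using Shinohara's description (Theorem \ref{Shinohara}) — $\sigma$ is the algebraic sum of signed crossings of one underarc with the overarcs — I expect a clean formula of the form $\sigma(T_2(p,q)) = \sigma(p,q) \pm (\text{something controlled, e.g. } p\text{ or a sign sum})$, or more usefully a formula via the remark's closed form $\sigma(p,q)=\sum_{i=1}^{p-1}(-1)^{[iq/p]}$. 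The key quantity to pin down is $\lfloor(|\sigma|+1)/2\rfloor$ for $T_2(p,q)$ in terms of that of $(p,q)$; I would aim to show it does not decrease, or decreases by at most what the radius loses.

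Next I would control the change in the radius $\lfloor (l-2(i_0-1))/2\rfloor$. Since $l$ is fixed under $T_2$, this amounts to controlling how $i_0$ (equivalently, the length of the stable plateau $a_{i_0-1}=\cdots=a_{l-i_0}$) changes when $\alpha_i \mapsto \alpha_i' = a_i'$. Here I would use $\alpha_i - \alpha_{i-1} = b_i - b_{l-i}$ together with the transformed version $\alpha_i' - \alpha_{i-1}' = b_i' - b_{l-i}' = 2(\alpha_i - \alpha_{i-1}) + (b_i - b_{l-i}) = 3(\alpha_i - \alpha_{i-1})$; wait — more carefully, $b_i' - b_{l-i}' = 2\alpha_i + b_i - 2\alpha_{l-i} - b_{l-i}$, and one must re-derive $\alpha_i' - \alpha_{i-1}'$ from the IH structure of $\{b_i'\}$ rather than naively. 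The honest approach is: the plateau of $\{\alpha_i'\}$ occurs exactly where $\alpha_i' = \alpha_{i-1}'$, i.e. where $b_i' = b_{l-i}'$. So I would compare the set $\{i : b_i' = b_{l-i}'\}$ with $\{i : b_i = b_{l-i}\}$ and show the plateau cannot grow "too much" relative to the gain (or non-loss) in $\sigma$ established in the previous paragraph. The monotonicity/IH properties (IH1)–(IH3) of $\{b_i\}$, which transfer to $\{b_i'\}$ by Hartley's work, are what make this tractable: they force the plateau to be a single central interval, so $i_0'$ is determined by a single threshold condition.

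The cleanest route, which I would try first, is to prove a sharper bookkeeping inequality: show directly that $|\sigma(T_2(p,q))| \geq |\sigma(p,q)| + (\text{radius}(T_2(p,q)) - \text{radius}(p,q))\cdot 2$ up to floor-function slack, so that Inequality (\ref{HM inequality}) is preserved mechanically. Concretely: (i) express $\sigma$ for $(p,q)$ in terms of the $b_i$ via Shinohara (each bottom loop and the starting point contributes a controlled sign to the crossing count), obtaining $\sigma$ as an alternating-type sum of the $b_i$; (ii) plug in $b_i' = 2\alpha_i + b_i$ and simplify; (iii) compare with the radius formula, using that the radius is governed by where consecutive $\alpha'$ agree. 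The main obstacle I anticipate is step (i) together with the parity bookkeeping: getting an exact, sign-correct expression for $\sigma$ in terms of $b_i$ from the extended diagram (orientations of top/bottom loops and of the principal underarc's endpoints) is fiddly, and the floor functions in (\ref{HM inequality}) mean I must be careful not to lose a unit when $|\sigma|$ changes parity. Once that accounting is set up correctly, I expect the inequality for $T_2$ to follow from the IH properties without further surprises — it is the $T_1$ move (deferred to the last subsection) that is genuinely subtle, not this one.
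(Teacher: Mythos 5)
Your plan identifies the two correct quantities to control (the change in $\sigma$ and the change in the radius) and the two correct tools (the closed formula $\sigma(p,q)=\sum_{i=1}^{p-1}(-1)^{\lfloor iq/p\rfloor}$ and the relation $\alpha_i-\alpha_{i-1}=b_i-b_{l-i}$), but it stops short of the two one-line computations that finish the argument, and the ``cleanest route'' you propose instead is a detour. There is no trade-off to book-keep: under $T_2$ both quantities are \emph{exactly} invariant. For the signature, $\lfloor i(2p+q)/p\rfloor=2i+\lfloor iq/p\rfloor$, so every summand in the closed formula is unchanged and $\sigma(p,2p+q)=\sigma(p,q)$; your anticipated ``main obstacle'' of extracting a sign-correct expression for $\sigma$ in terms of the $b_i$ from the extended diagram never arises. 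For the radius, your hesitation about applying $\alpha_i'-\alpha_{i-1}'=b_i'-b_{l'-i}'$ being ``naive'' is unfounded: that identity is Hartley's general relation, valid for every admissible pair, so it applies verbatim to $T_2(p,q)$. Substituting the transformation formula and using the symmetry $\alpha_{l-i}=\alpha_{i-1}$, one gets that $\alpha_i'-\alpha_{i-1}'$ equals $\alpha_i-\alpha_{i-1}$ (this is exactly the paper's computation), so in particular it has the same sign for every $i$; hence the plateau, $i_0$, and the radius $\lfloor(l-2(i_0-1))/2\rfloor$ are all unchanged, with no appeal to (IH1)--(IH3) needed. Your own partial computation yielding a positive multiple of $\alpha_i-\alpha_{i-1}$ already suffices for the sign conclusion --- you just did not draw it.

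The parity worry you raise about $\lfloor(|\sigma|+1)/2\rfloor$ is also moot once $\sigma$ is known to be literally equal before and after the move: both sides of Inequality (\ref{HM inequality}) are unchanged, so the conclusion for $T_2(p,q)$ is the hypothesis for $(p,q)$ verbatim. In short, the ingredients you list are the right ones, but the proof is the two short computations above, not the inequality between the change in $|\sigma|$ and the change in radius that your plan sets out to establish.
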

\begin{proof}
Let $\alpha_i$, $b_i$, $l$ denote the number of connecting arcs, bottom sequence, and length for $(p,q)$, and let $\alpha_i'$, $b_i'$, $l'$ be the corresponding quantities for $T_2(p,q)=(p,2p+q)$. By Prop.\ \ref{change of bottom sequence} $b_i'=2\alpha_i+b_{l-i}$ and $l'=l$, hence we have
$$\begin{aligned}
\alpha_i'-\alpha_{i-1}' &= b_i'-b_{l'-i}'\\
&=(2\alpha_i+b_{l-i})-(2\alpha_{l-i}+b_{i})\\
&=2(\alpha_i-\alpha_{l-i})-(b_i-b_{l-i})\\
&=2(\alpha_i-\alpha_{i-1})-(b_i-b_{l-i})\\
&=2(\alpha_i-\alpha_{i-1})-(\alpha_i-\alpha_{i-1})\\
&=\alpha_i-\alpha_{i-1},
\end{aligned}$$
where in the $4th$ equality we used $\alpha_{l-i}=\alpha_{i-1}$ due to the symmetry of the Alexander polynomial. So the radius $m=\lfloor \frac{l-2(i_0-1)}{2}\rfloor$ does not change after the $T_2$ move.

The signature invariant is also unchanged after the $T_2$ move. To see this, note
$$\sigma(p,q)=\sum_{i=1}^{p-1}(-1)^{\lfloor \frac{iq}{p}\rfloor }=\sum_{i=1}^{p-1}(-1)^{\lfloor \frac{i(q+2p)}{p} \rfloor}=\sigma(p,2p+q).$$

Therefore, Theorem \ref{reformulation of the main theorem} is true for $T_2(p,q)$ provided it is true for $(p,q)$. 
\end{proof}

\subsection{The effect of $T_3$ move}
In this subsection we examine the effect of $T_3$. 
\begin{prop}
If Theorem \ref{reformulation of the main theorem} is true for an admissible pair $(p,q)$, then it is true for $T_3(p,q)$.
\end{prop}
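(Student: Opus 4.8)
The plan is to mimic the structure of the $T_2$ argument, but now the relevant identity from Prop.\ \ref{change of bottom sequence} is $b_i' = 2\alpha_i - b_i$ with $l' = l$, and the crucial difference is that, unlike $T_2$, the $T_3$ move genuinely changes both the radius of the stable terms and the signature, so the work will be to show that these two changes are compatible with Inequality (\ref{HM inequality}). First I would run the same telescoping computation as in the $T_2$ case: using $\alpha_i' - \alpha_{i-1}' = b_i' - b_{l-i}'$ and $\alpha_{l-i} = \alpha_{i-1}$, one gets
$$\alpha_i' - \alpha_{i-1}' = (2\alpha_i - b_i) - (2\alpha_{l-i} - b_{l-i}) = 2(\alpha_i - \alpha_{i-1}) - (b_i - b_{l-i}) = 2(\alpha_i-\alpha_{i-1}) - (\alpha_i - \alpha_{i-1}) = \alpha_i - \alpha_{i-1}.$$
Hmm — if this computation were literally identical the radius would not change either, so I expect the $T_3$ identity to actually read $b_i' = 2\alpha_{l-i} - b_{l-i}$ or similar (matching the indexing convention of the table), and the telescoping to produce a genuinely different second difference. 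I would carry out this bookkeeping carefully, then read off the new stable range $[i_0'-1, l-i_0']$ from the sign pattern of $\alpha_i' - \alpha_{i-1}'$, expressing $i_0'$ in terms of $i_0$ and the $b_i$'s.

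Next I would handle the signature. From the Remark after Theorem \ref{Shinohara}, $\sigma(p,2p-q) = \sum_{i=1}^{p-1}(-1)^{\lfloor i(2p-q)/p \rfloor} = \sum_{i=1}^{p-1}(-1)^{\lfloor -iq/p \rfloor} = \sum_{i=1}^{p-1}(-1)^{-\lceil iq/p\rceil}$; since $\gcd(p,q)=1$ the quantity $iq/p$ is never an integer for $1\le i\le p-1$, so $\lceil iq/p\rceil = \lfloor iq/p\rfloor + 1$ and hence $\sigma(p,2p-q) = -\sum_{i=1}^{p-1}(-1)^{\lfloor iq/p\rfloor} = -\sigma(p,q)$. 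Thus $T_3$ negates the signature, leaving $|\sigma|$ and the left-hand side of (\ref{HM inequality}) unchanged. Alternatively, and probably more in keeping with the paper's combinatorial spirit, I would derive $|\sigma(T_3(p,q))| = |\sigma(p,q)|$ directly from Shinohara's crossing count on the extended diagram, since $T_3$ reflects the diagram in a way that reverses the sign of every crossing of the principal underarc with the overarcs.

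With $|\sigma|$ fixed and a formula for $i_0'$ in hand, the remaining step — and the one I expect to be the main obstacle — is the purely arithmetic verification that $\lfloor(l-2(i_0'-1))/2\rfloor \le \lfloor(|\sigma|+1)/2\rfloor$, given that the same inequality holds for $(p,q)$. This is where the IH properties (IH1)--(IH3) of the bottom sequence must be invoked: one needs to bound how far the stable plateau of $\{\alpha_i'\}$ can extend in terms of the plateau structure of $\{b_i\}$, and then relate that plateau to the signature. Concretely, I would (i) express $\sigma$ itself in terms of the $b_i$ — presumably something like an alternating count tied to the nonzero entries of the bottom sequence, which should be extractable from Shinohara's theorem applied to the extended diagram — and (ii) combine this with the monotonicity packaged in (IH1) to show the new radius cannot exceed $\lfloor(|\sigma|+1)/2\rfloor$. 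The delicate point is the interaction of the floor functions with the parity change that can occur when $(p,q)$ is a two-component link, so I would treat the cases $\sigma$ even and $\sigma$ odd separately, using the reformulated bound (\ref{HM inequality}) precisely because it is parity-robust. If the direct relation between $\sigma$ and $\{b_i\}$ proves awkward, the fallback is to feed both the hypothesis for $(p,q)$ and the computed changes $i_0 \mapsto i_0'$, $\sigma \mapsto -\sigma$ into the inequality and check it term by term, which should suffice since $T_3$ does not increase the radius beyond what $T_1$ and $T_2$ already control.
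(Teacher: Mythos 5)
Your two computations are exactly the paper's proof, and both are correct: the telescoping identity $\alpha_i'-\alpha_{i-1}'=\alpha_i-\alpha_{i-1}$ (using $b_i'=2\alpha_i-b_i$, $l'=l$ from Prop.\ \ref{change of bottom sequence}, and the symmetry $\alpha_{l-i}=\alpha_{i-1}$) shows the radius of the stable terms is unchanged, and the floor-function manipulation $\sigma(p,2p-q)=-\sigma(p,q)$ shows $|\sigma|$ is unchanged. At that point both sides of Inequality (\ref{HM inequality}) are the same for $T_3(p,q)$ as for $(p,q)$, and the proof is finished. The problem with your proposal is that you do not believe your own (correct) first computation: your opening premise that ``the $T_3$ move genuinely changes both the radius of the stable terms and the signature'' is false, and it leads you to suspect the identity $b_i'=2\alpha_i-b_i$ of being mis-stated and to defer the conclusion to a speculative third paragraph. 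As the paper notes, $T_3$ replaces the knot by its mirror image, so the Alexander polynomial --- hence the entire coefficient pattern and the radius --- is unchanged, and the signature is merely negated; there is nothing left to reconcile.

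The entire final paragraph of your proposal (expressing $\sigma$ in terms of the $b_i$, invoking (IH1)--(IH3), case analysis on the parity of $\sigma$) is therefore unnecessary for $T_3$, and it is also the part you leave unexecuted (``presumably something like\dots''). That kind of delicate interaction between the bottom sequence, the IH properties, and the signature genuinely does arise in the paper, but only for the $T_1$ move, where $l$ increases by one and $\sigma$ shifts by one; for $T_3$ no such analysis is needed. If you simply delete the sentence beginning ``Hmm'' and the last paragraph, and state the conclusion after the signature computation, your argument coincides with the paper's.
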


\begin{proof}
Let $\alpha_i$, $b_i$, $l$ denote the number of connecting arcs, bottom sequence, and length for $(p,q)$, and let $\alpha_i'$, $b_i'$, $l'$ be the corresponding quantities for $T_3(p,q)=(p,2p-q)$. In this case, we have $b_i'=2\alpha_i-b_i$ and $l'=l$ by Prop.\ \ref{change of bottom sequence}. Therefore,
$$\begin{aligned}
\alpha_i'-\alpha_{i-1}' &= b_i'-b_{l'-i}'\\
&=(2\alpha_i-b_{i})-(2\alpha_{l-i}-b_{l-i})\\
&=2(\alpha_i-\alpha_{l-i})-(b_i-b_{l-i})\\
&=2(\alpha_i-\alpha_{i-1})-(b_i-b_{l-i})\\
&=2(\alpha_i-\alpha_{i-1})-(\alpha_i-\alpha_{i-1})\\
&=\alpha_i-\alpha_{i-1}\\
\end{aligned}$$

So the radius $m=\lfloor \frac{l-2(i_0-1)}{2}\rfloor$ does not change after the $T_3$ move.

For the signature, we have
$$\sigma(p,2p-q)=\Sigma_{i=1}^{p-1}(-1)^{\lfloor\frac{i(2p-q)}{p}\rfloor}=\Sigma_{i=1}^{p-1}(-1)^{\lfloor\frac{i(-q)}{p}\rfloor}=-\Sigma_{i=1}^{p-1}(-1)^{\lfloor\frac{iq}{p}\rfloor}=-\sigma(p,q).$$
Therefore, neither does $|\sigma|$ change after the $T_3$ move. Hence Theorem \ref{reformulation of the main theorem} is true for $T_3(p,q)$ provided it is true for $(p,q)$.
\end{proof}

\subsection{The effect of $T_1$ move}
In this subsection we will discuss the effect of $T_1$. Note on the level of knots, $T_2$ preserves the knot, and $T_3$ changes the knot to its mirror, and that is the reason these two cases are relatively easier compared to case of $T_1$. The goal of this subsection is to prove
\begin{prop}\label{T_1 is good}
If Theorem \ref{reformulation of the main theorem} is true for an admissible pair $(p,q)$, then it is true for $T_1(p,q)$.
\end{prop}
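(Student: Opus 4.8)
\textbf{Proof proposal for Proposition \ref{T_1 is good}.}

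The plan is to track, under the move $T_1:(p,q)\mapsto(p+q,q)$, how both the radius of the stable terms and the signature change, and to verify that the inequality \eqref{HM inequality} is preserved. By Prop.\ \ref{change of bottom sequence}, $T_1$ leaves the bottom sequence unchanged but increments the length, $l'=l+1$ and $b_i'=b_i$ (with $b_l'=0$). Feeding this into the identity $\alpha_i'-\alpha_{i-1}'=b_i'-b_{l'-i}'$ from Prop.\ \ref{change of bottom sequence}, I would compute the new arc sequence $\alpha_i'$ in terms of the old data; because the length jumps by one, the new Alexander polynomial has one extra coefficient and its shape of increase/plateau/decrease has to be recomputed from scratch, so the first task is to pin down the new plateau width, i.e.\ the new index $i_0'$, in terms of the IH data $(h,r)$ attached to $\{b_i\}$ via (IH1)--(IH3). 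Here the key arithmetic input is that $T_1$ on the level of continued fractions appends a term, so on the extended diagram it literally inserts one more overarc that the principal underarc passes through; I expect the new plateau radius to either stay the same or increase by exactly one depending on the parity of $l$ and on whether the old sequence $\{S_j\}$ from (IH1) had already stabilized.

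Next I would analyze the signature. Unlike $T_2$ and $T_3$, the move $T_1$ genuinely changes the knot, and the clean congruence trick used in those subsections is not available since $p$ itself changes to $p+q$. Instead I would go back to Shinohara's description (Theorem \ref{Shinohara}): $\sigma$ is the algebraic sum of the signed crossings of the principal underarc with the overarcs in the extended diagram, oriented downward. Since $T_1$ inserts exactly one new overarc and the principal underarc now runs one grid-column further, I would isolate the contribution of the new crossings to $\sigma$ and show it changes $|\sigma|$ by $0$ or $2$, with the increase-by-two case forced precisely when the plateau radius also goes up by one. In effect the proof becomes a bookkeeping lemma: \emph{the change in $\lfloor(l-2(i_0-1))/2\rfloor$ is bounded above by the change in $\lfloor(|\sigma|+1)/2\rfloor$ under $T_1$.} To make this precise I would introduce a local model of the rightmost few columns $W_{l-1},W_l,W_{l+1}$ of the extended diagram of $T_1(p,q)$, read off the new top/bottom loop structure there, and count signed crossings and loops simultaneously.

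The main obstacle, as the authors themselves flag, will be step one: getting a clean formula for $i_0'$, equivalently for the new plateau, out of the IH properties. The difficulty is that $i_0$ is defined by the \emph{first} place the $\alpha_i$ stop strictly increasing, and after $T_1$ the differences $\alpha_i'-\alpha_{i-1}'=b_i-b_{l+1-i}$ are indexed against a shifted reflection point $l+1-i$ rather than $l-i$, so the symmetry that made $T_2,T_3$ transparent is broken and one must carefully use (IH2) and (IH3) to control where $b_i-b_{l+1-i}$ first hits zero. I would set $m=\lfloor(l-2(i_0-1))/2\rfloor$ and $m'$ the corresponding quantity for $T_1(p,q)$, prove $m'\le m+1$ using (IH1)--(IH3), and separately prove that whenever $m'=m+1$ the signed-crossing count forces $|\sigma(T_1(p,q))|\ge |\sigma(p,q)|+2$; combining these with the inductive hypothesis $m\le\lfloor(|\sigma|+1)/2\rfloor$ closes the induction. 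A secondary annoyance will be the parity floors in \eqref{HM inequality}: since $T_1$ can switch a knot to a link and back, I must check the bound in each parity class of $l$ separately, but this is routine once the two inequalities above are in hand.
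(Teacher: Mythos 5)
Your overall skeleton---bound the change in the plateau radius $m=\lfloor(l-2(i_0-1))/2\rfloor$ using the IH properties, bound the change in $\lfloor(|\sigma|+1)/2\rfloor$, and combine with the inductive hypothesis---is the same as the paper's, and your first step (locating $i_0'$ from $\alpha_i'-\alpha_{i-1}'=b_i-b_{l+1-i}$ via (IH1)--(IH3)) is essentially what the paper carries out in Propositions \ref{radius change if there were no stable term} and \ref{radius change when there are stable terms}, where it is shown that $i_0'\in\{i_0,i_0+1\}$ and hence $m'\le m+1$. The gap is in your signature step. A direct comparison of diagrams (Lemma \ref{effect of T_1 on signature}) gives the exact identity $\sigma(T_1(p,q))=\sigma(p,q)+1$: old crossings persist, the two new crossings created by each bottom loop cancel algebraically, and the starting overarc contributes one new positive crossing. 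Consequently $|\sigma|$ changes by exactly $\pm1$ (consistent with the knot/link parity flip), not by $0$ or $2$ as you assert, and $\lfloor(|\sigma|+1)/2\rfloor$ increases by $1$ precisely when $\sigma(p,q)\ge 0$ and is unchanged when $\sigma(p,q)<0$. The entire difficulty of the proposition is therefore to show that in the bad case $m'=m+1$ one necessarily has $\sigma(p,q)\ge 0$; your plan of reading this off from a local model of the columns $W_{l-1},W_l,W_{l+1}$ cannot work, because the sign of $\sigma$ is a global invariant of the whole diagram and is invisible in the rightmost few columns.

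The paper closes this gap with a global structural argument that your proposal does not anticipate. When $i_0'=i_0$, the IH properties force $b_{i_0}=\cdots=b_l=0$ with $i_0\le\lfloor l/2\rfloor$, i.e.\ more than half of the bottom sequence vanishes (Proposition \ref{radius change when there are stable terms}). One then shows that such a pair must have the form $T_1^{\,l-i_0}(p',q')$ with $l(p',q')=i_0$, using that $T_2$ leaves no zeros in the bottom sequence (Lemma \ref{$T_2(p,q)$ has no zero terms in its bottom sequence}) and $T_3\circ T_1$ leaves exactly one (Lemma \ref{effect of T3T1}). Combining the a priori bound $|\sigma(p',q')|\le l(p',q')-1$ (Lemma \ref{signature is less than the bredth of the alexander poly}, itself proved by induction over the $T_i$ moves) with $\sigma(T_1(\cdot))=\sigma(\cdot)+1$ applied $l-i_0$ times yields $\sigma(p,q)\ge l-2i_0\ge 0$, which is Proposition \ref{signature is positive when half of b_i vanish}. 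Without a replacement for this chain of lemmas, your bookkeeping inequality cannot be established.
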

The proof of this proposition will come at the end of this subsection, after investigating the effect of $T_1$ on the signature and the Alexander polynomial.

First of all, we present the effect of $T_1$ on the signature.
\begin{lem}\label{effect of T_1 on signature}
$\sigma(T_1(p,q))-\sigma(p,q)=1$.
\end{lem}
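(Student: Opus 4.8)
\textbf{Plan for the proof of Lemma \ref{effect of T_1 on signature}.}
The goal is to show that applying $T_1$ — which sends $(p,q)$ to $(p+q,q)$ — raises the signature by exactly $1$. My plan is to work directly with the formula $\sigma(p,q)=\sum_{i=1}^{p-1}(-1)^{\lfloor iq/p\rfloor}$ from the Remark after Theorem \ref{Shinohara}, so that $\sigma(p+q,q)=\sum_{i=1}^{p+q-1}(-1)^{\lfloor iq/(p+q)\rfloor}$. The natural first move is to understand the sum defining $\sigma(p+q,q)$ by comparing the two floor functions $\lfloor iq/(p+q)\rfloor$ and $\lfloor iq/p\rfloor$. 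Since $q$ is odd and $\gcd(p,q)=\gcd(p+q,q)=1$, none of the quantities $iq/(p+q)$ for $1\le i\le p+q-1$ is an integer, so the parities are well-defined throughout.

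The key step is a term-by-term matching. Write the index set $\{1,\dots,p+q-1\}$ and ask, for each $i$, how $\lfloor iq/(p+q)\rfloor$ relates to something in the $(p,q)$-sum. One clean way: split off the range $1\le i\le p-1$ and the range $p\le i\le p+q-1$, or alternatively use the substitution $i\mapsto p+q-i$ together with the standard reciprocity-type identity $\lfloor iq/(p+q)\rfloor+\lfloor (p+q-i)q/(p+q)\rfloor = q-1$ (valid since $iq/(p+q)\notin\mathbb Z$). Because $q-1$ is even, this shows the $(p+q,q)$-sum is symmetric under $i\leftrightarrow p+q-i$, so $\sigma(p+q,q)=2\sum_{i=1}^{\lfloor(p+q-1)/2\rfloor}(-1)^{\lfloor iq/(p+q)\rfloor}$ up to a middle-term correction; the analogous statement holds for $\sigma(p,q)$. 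I would then relate $\lfloor iq/(p+q)\rfloor$ for small $i$ to $\lfloor iq/p\rfloor$ by noting $iq/p - iq/(p+q) = iq^2/(p(p+q))$ is small in the relevant range, and track exactly when a floor value drops by one as the denominator increases from $p$ to $p+q$. Each such "drop" changes a sign in the alternating sum, and counting these drops with their signs should telescope to a net change of $1$.

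Alternatively — and this may be cleaner — I would prove the lemma geometrically using Theorem \ref{Shinohara} together with the structure of the extended diagram, since the whole induction is set up around extended diagrams anyway. The move $T_1$ appends one new overarc $W_{l+1}$ on the right (Prop.\ \ref{change of bottom sequence} records that the length goes up by $1$ and the bottom sequence is unchanged), and one should be able to see directly from the picture which new signed crossings of the principal underarc with the overarcs are created, and that their algebraic sum is $+1$. Concretely, I would track the underarc through the new rightmost region and use the orientation convention (overarcs downward-pointing, overarcs center-pointing in Schubert form) to compute the sign of each new crossing; the bottom-loop/top-loop bookkeeping should make the net contribution collapse to a single $+1$.

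\textbf{Main obstacle.} The delicate point is the bookkeeping of exactly which floor values change when passing from denominator $p$ to $p+q$ (equivalently, exactly which crossings are created or destroyed in the diagram), and checking that the positive and negative changes cancel except for a single surplus of $+1$ rather than $-1$ or $0$. Getting the sign right — and handling the parity subtlety that arises because $(p,q)$ and $(p+q,q)$ may have opposite parities of $p$, so one of the two sums has a genuine middle term and the other does not — is where I expect to spend the real effort. I would pin down the sign at the end by checking the base case, e.g.\ $T_1(1,1)=(2,1)$ with $\sigma(1,1)=0$ and $\sigma(2,1)=1$, to make sure the convention is consistent.
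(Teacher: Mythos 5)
Your second, geometric route is exactly the paper's proof: the paper realizes $T_1$ as sliding the bottom ends of the overarcs one unit to the right in the extended diagram, matches old crossings of the principal underarc with new ones pairwise, observes that each bottom loop contributes two new crossings of opposite sign (hence cancelling), and that the starting overarc contributes a single new positive crossing, giving the net $+1$. So you have correctly identified the intended argument. The problem is that in both of your routes the decisive step is deferred rather than carried out: you write that the drops in floor values "should telescope to a net change of $1$" and that the "bottom-loop/top-loop bookkeeping should make the net contribution collapse to a single $+1$," and you explicitly flag the sign and the accounting as the main obstacle. That accounting \emph{is} the lemma; without it the proposal does not establish $\sigma(T_1(p,q))-\sigma(p,q)=1$ (as opposed to $0$ or $-1$), so as written there is a genuine gap.

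If you want to complete your first route, the term-by-term comparison of $\lfloor iq/(p+q)\rfloor$ with $\lfloor iq/p\rfloor$ is an awkward slicing (the two sums do not even have the same number of terms); it is cleaner to compare level sets. For $0\le j\le q-1$, the number of $i\in\{1,\dots,p-1\}$ with $\lfloor iq/p\rfloor=j$ is the number of integers in the open interval $\left(jp/q,(j+1)p/q\right)$, while for $(p+q,q)$ the corresponding interval is $\left(j(p+q)/q,(j+1)(p+q)/q\right)$, which is the previous interval translated by the integer $j$ and lengthened by exactly $1$; since the endpoints are non-integral for $j\ge 1$ (and the $j=0$ case is checked directly), each level set gains exactly one element. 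Hence
$$\sigma(p+q,q)-\sigma(p,q)=\sum_{j=0}^{q-1}(-1)^{j}=1,$$
using that $q$ is odd. This closes the gap and is arguably more self-contained than the diagrammatic argument, since it needs only the remark following Theorem \ref{Shinohara}; the paper's geometric proof, by contrast, stays within the extended-diagram framework used throughout Section 3.
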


\begin{figure}
\begin{minipage}[t]{0.5\linewidth}
\centering{
\fontsize{1cm}{2em}
\resizebox{40mm}{!}{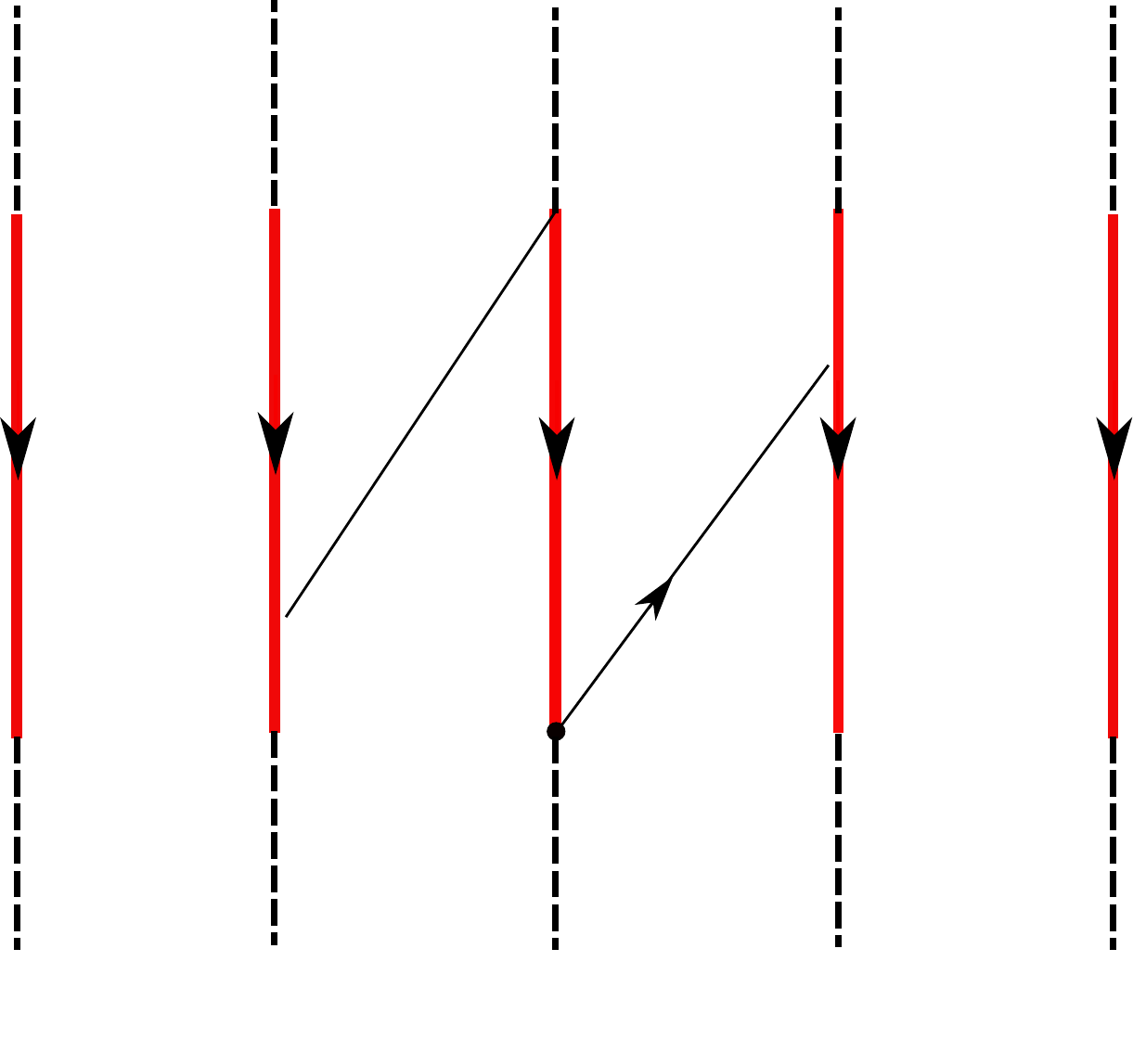}
\caption{Before $T_1$}
\label{BeforeT1}
}
\end{minipage}%
\begin{minipage}[t]{0.5\linewidth}
\centering{
\fontsize{1cm}{2em}
\resizebox{50mm}{!}{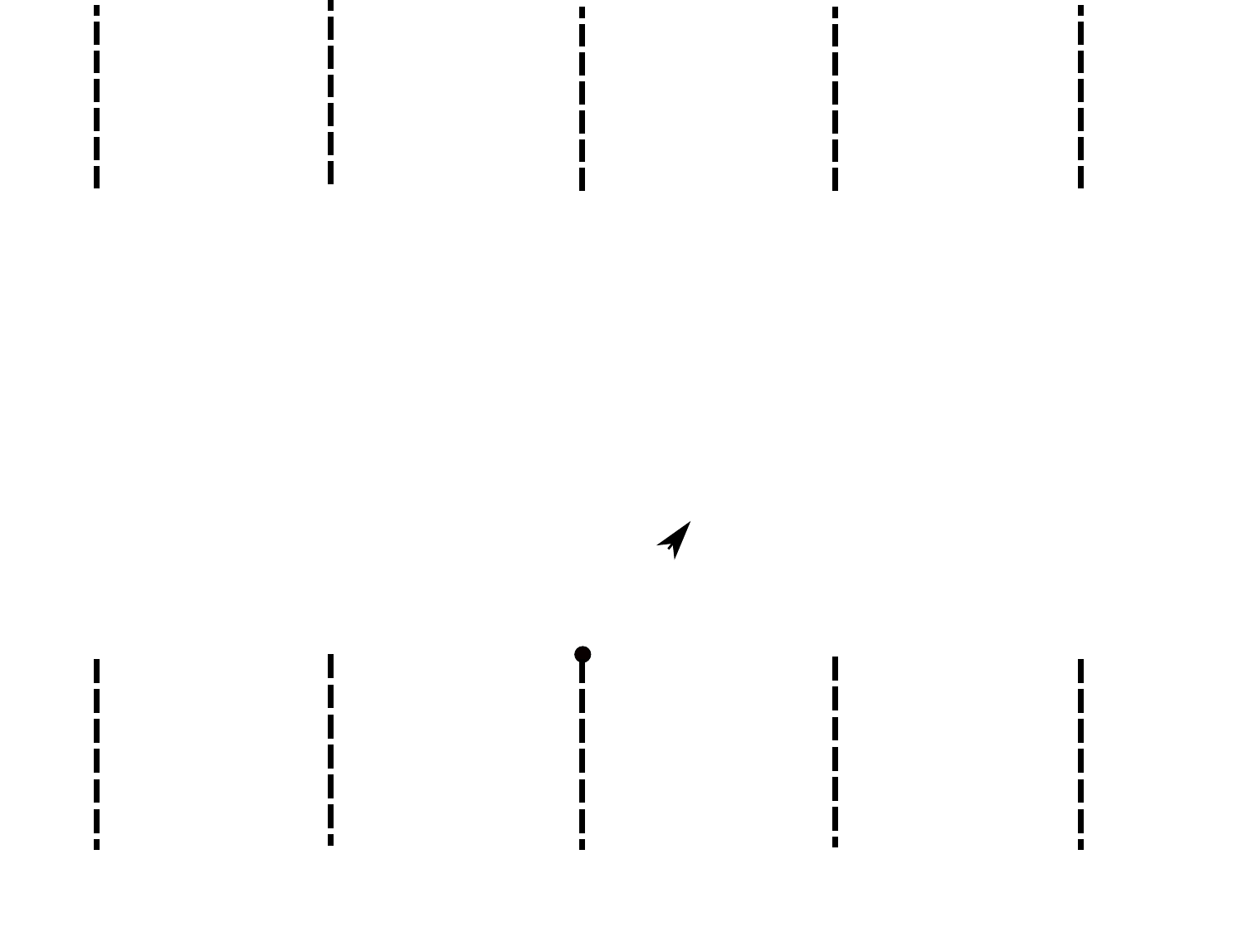}
\caption{After $T_1$}
\label{AfterT1}
}
\end{minipage}
\end{figure}
\begin{figure}
\centering{
\fontsize{1cm}{2em}
\resizebox{85mm}{!}{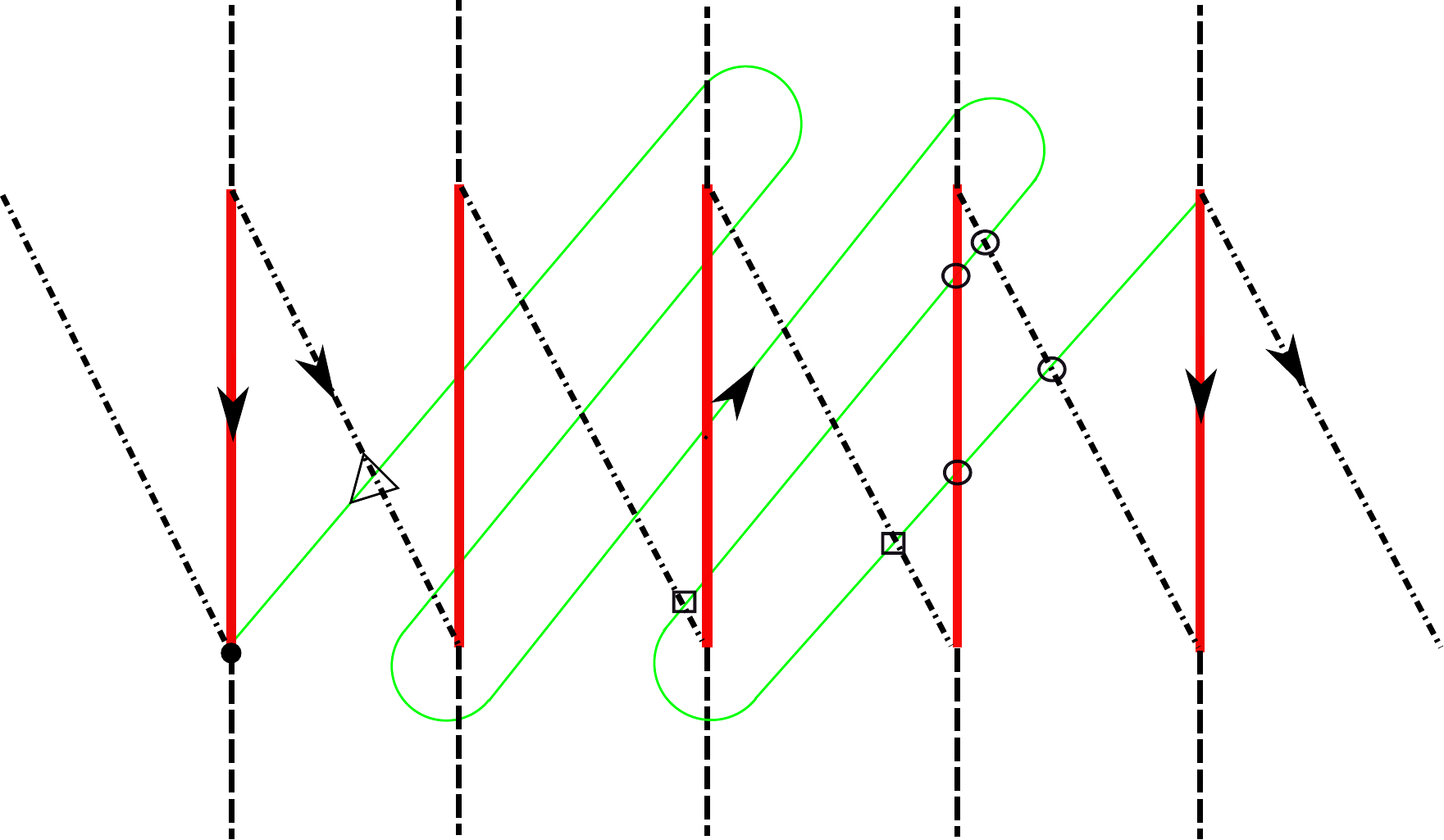}
\caption{The effect of $T_1$ on signature}
\label{signatureT1}
}
\end{figure}
\begin{proof}
The effect of $T_1$ on the extended diagram is shown in Fig.\ \ref{BeforeT1} and Fig.\ \ref{AfterT1}. We describe the effect as sliding the bottom end of the parallel overarcs one unit to the right.

First compare the new overarcs and the old ones pair by pair who share the same top, starting from right to left. We see that the crossings between an old overarc and the principle underarc has counterparts in the crossings between the new overarc and the principle underarc (the crossing which are circled in Fig.\ \ref{signatureT1}). Secondly, the presence of each bottom circle in the principle underarc creates two new crosing with the new overarcs (the crossings which are boxed in Fig.\ \ref{signatureT1}), yet these two crossings cancel each other algebraically. Finally, sliding the overarc on which the principle underarc starts creates a new crossing (the crossing marked by a triangle in Fig.\ \ref{signatureT1}), and this crossing has positive sign. The conclusion then follows from comparing the sum of the signed crossings in view of Theorem \ref{Shinohara}. 
\end{proof}

Next we examine how the radius of the stable terms behave under the $T_1$ operation. We separate the discussion into two cases. First we have
\begin{prop}\label{radius change if there were no stable term}
Given an admissible pair $(p,q)$, if there are no stable terms in the corresponding Alexander polynomial, then there are exactly two stable terms in the Alexander polynomial corresponding to $T_1(p,q)$.
\end{prop}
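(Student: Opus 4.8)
The plan is to track, under the $T_1$ move, how the bottom sequence $b_i$ determines the shape of the arc sequence $\alpha_i$ via the identity $\alpha_i-\alpha_{i-1}=b_i-b_{l-i}$ from Hartley's proposition. By Prop.\ \ref{change of bottom sequence}, $T_1$ leaves the bottom sequence unchanged, $b_i'=b_i$, but increases the length by one: $l'=l+1$. So everything reduces to understanding the difference equation $\alpha_i'-\alpha_{i-1}'=b_i-b_{l+1-i}$ for $1\leq i\leq l+1$ in terms of $\alpha_i'-\alpha_{i-1}'=b_i-b_{l-i}$ for $1\leq i\leq l$, keeping in mind that by convention $b_k=0$ for $k>l$ (and $b_0\geq 1$ always, since the principal underarc must start somewhere).

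First I would record what "no stable terms" means: by the Remark, the radius $\lfloor (l-2(i_0-1))/2\rfloor$ is zero, and in fact the strict-inequality situation forces $i_0-1=l-i_0$, i.e.\ $l$ is even, $l=2(i_0-1)$, and $\alpha_0<\alpha_1<\cdots<\alpha_{i_0-1}>\alpha_{i_0}>\cdots>\alpha_{l-1}$ with every inequality strict — equivalently, $\alpha_i-\alpha_{i-1}=b_i-b_{l-i}$ is strictly positive for $1\leq i\leq l/2$ and strictly negative for $l/2<i\leq l$. The next step is to read off, using the IH properties (IH1)–(IH3) for $\{b_i\}$, what the sign pattern of $b_i-b_{l-i}$ being strictly one-signed on each half imposes on the $b_i$ themselves: I expect it forces $b_0>b_1>\cdots$ down to the relevant index, with no repeats, so that the $S$-sequence in (IH1) is itself strictly increasing, i.e.\ $r=h$ in the notation of (IH1). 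Then I would compute the new first differences $\alpha_i'-\alpha_{i-1}'=b_i-b_{l+1-i}$: for the middle index the term $b_{l+1-i}$ now involves shifted arguments, and because the old sequence had its "peak" exactly at the center with no plateau, the shift by one creates exactly one index where the new difference vanishes — producing precisely two equal consecutive coefficients $\alpha_{j}'=\alpha_{j+1}'$ and hence a radius of exactly one, i.e.\ two stable terms. I would verify the endpoints separately: that $\alpha_0'<\alpha_1'$ still (this uses $b_0\geq b_1$, in fact $b_0>b_1$ or $b_0\geq 1 = $ the start contribution) and that beyond the stable pair the differences resume being strictly negative by the symmetry $\alpha_{l'-i}'=\alpha_{i-1}'$ of the new Alexander polynomial.

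A cleaner way to organize the computation, which I would use, is: since $b_i'=b_i$ and $l'=l+1$, write $\alpha_i'-\alpha_{i-1}'=(b_i-b_{l-i})+(b_{l-i}-b_{l+1-i})$ for $1\leq i\leq l$, i.e.\ the new difference equals the old difference $\alpha_i-\alpha_{i-1}$ plus the correction $b_{l-i}-b_{l+1-i}\geq 0$ (nonnegative by (IH3)/monotonicity of $b$ on its support, since $l-i\leq l+1-i$ and $b$ is nonincreasing where nonzero past the peak — here I must be careful about exactly where $b$ is increasing vs.\ decreasing, which is governed by (IH1)). This exhibits the new partial sums $\alpha_j'$ as the old ones nudged upward by a telescoping amount, and the place where the nudge turns the strict peak into a plateau of width two is forced; counting gives radius one. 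The case $i=l+1$ gives $\alpha_{l+1}'-\alpha_l'=b_{l+1}-b_{-1}\cdot(\text{not applicable})$ — more precisely one uses $b_{l+1}=0$ and symmetry to get the last strict decrease.

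The main obstacle I anticipate is the bookkeeping at the center: showing the plateau has width \emph{exactly} two and not zero or four. Width $\geq 2$ should follow from the parity jump caused by the length changing from even to odd (an odd-length Alexander polynomial with a strict-peak shape is impossible — there must be a central plateau of even width $\geq 2$), and width $\leq 2$ must be extracted from the strictness of the original inequalities together with (IH1), which says the $S_j$ strictly increase until they hit a single stable value — in the no-stable-term case $r=h$, so the "flat part" of $S$ is as short as possible, and this should translate into the new plateau being exactly a single equal pair. Pinning down this last implication precisely — converting "$r=h$ for the old $b$" into "radius exactly $1$ for the new $\alpha$" via the $\alpha$–$b$ relation across the length shift — is the delicate step that will need the IH properties in full, rather than just monotonicity.
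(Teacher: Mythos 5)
Your plan starts from the same identity the paper uses ($\alpha_i'-\alpha_{i-1}'=b_i'-b_{l'-i}'$ with $b_i'=b_i$ and $l'=l+1$), and your observation that the central shifted difference vanishes is exactly the paper's first computation. But the decisive half of the claim --- that the new plateau has width \emph{exactly} two --- is precisely the step you defer as ``the delicate step that will need the IH properties in full,'' and you never supply it. In the paper it is a one-line application of (IH2): writing $l=2k+1$, one has $\alpha_{k+1}'-\alpha_k'=b_{k+1}-b_{k+1}=0$, while $\alpha_k'-\alpha_{k-1}'=b_k-b_{k+2}\geq b_k-b_{k+1}=\alpha_k-\alpha_{k-1}>0$ since $b_{k+1}\geq b_{k+2}$ by (IH2); combined with symmetry and Hartley's already-established trapezoidal property of the new sequence, this pins the plateau to $\{\alpha_k',\alpha_{k+1}'\}$. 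You do not need to re-derive the sign pattern of all the new differences, which is where most of your proposed machinery goes.

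Moreover, several of the structural claims you plan to route the argument through are false and would derail it. First, $i_0-1=l-i_0$ gives $l=2i_0-1$, so $l$ is \emph{odd} (the degree $l-1$ is even), not even as you assert; consequently the length changes from odd to even under $T_1$, which is the correct direction for the parity argument forcing plateau width $\geq 2$. Second, the no-stable-terms hypothesis does not force $b_0>b_1>\cdots$: by (IH1) the bottom sequence satisfies $b_h\leq b_0\leq b_{h-1}\leq b_1\leq\cdots$, i.e.\ it increases from both ends of its support toward the middle, and configurations with $b_0<b_1$ are perfectly consistent with all $\alpha_i$ distinct. For the same reason your ``correction term'' $b_{l-i}-b_{l+1-i}$ need not be nonnegative for all $i$, so the telescoping decomposition does not give the monotonicity you want away from the center. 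The fix is to abandon the global analysis and check only the two central differences, as above.
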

\begin{proof}
In this case, in view of symmetry of the coefficients, the degree of the Alexander polynomial must be even, and hence $l=l(p,q)$ is odd. Let $l=2k+1$ and the coefficients of the Alexander polynomial for $(p,q)$ be $\alpha_0$,...,$\alpha_{k-1}$, $\alpha_{k}$,$\alpha_{k+1}$,...,$\alpha_{2k}$. After the $T_1$ move, denote the coefficients by $\alpha_0'$,...,$\alpha_{k}'$,$\alpha_{k+1}'$,...,$\alpha_{2k+1}'$. We have
\begin{displaymath}
\alpha_{k+1}'-\alpha_{k}'=b_{k+1}'-b_{(2k+2)-(k+1)}'=b_{k+1}-b_{k+1}=0,
\end{displaymath}
and
\begin{displaymath}
\alpha_{k}'-\alpha_{k-1}'=b_{k}'-b_{(2k+2)-(k)}'=b_{k}-b_{k+2}\geq b_{k}-b_{k+1}=\alpha_{k}-\alpha_{k-1}>0
\end{displaymath}
where we used $b_{k+1}\geq b_{k+2}$ due to the second IH property. So the only stable terms are $\alpha_{k+1}'$ and $\alpha_{k}'$.
\end{proof}
 
Secondly, when there were stable terms in the Alexander polynomial before we apply $T_1$, we have the following proposition.
 \begin{prop}\label{radius change when there are stable terms}
Let $\alpha_i$ and $\alpha_i'$ be the coefficients of the Alexander polynomial corresponding to $(p,q)$ and $T_1(p,q)$ respectively. Denote by $l$ the length of $(p,q)$. Let $i_0$, $i_0'$ be integers such that
$$\alpha_0<\alpha_1<\cdots<\alpha_{i_0-1}=\alpha_{i_0}=\cdots=\alpha_{l-i_0}>\alpha_{l-i_0+1}>\cdots>\alpha_{l-1},$$
and
$$\alpha_0'<\alpha_1'<\cdots<\alpha_{i_0'-1}'=\alpha_{i_0'}'=\cdots=\alpha_{l+1-i_0'}'>\alpha_{l-i_0'+2}'>\cdots>\alpha_{l}'.$$
If $i_0-1<l-i_0$, then one of the following statement is true:
\begin{enumerate}
\item  $i_0'=i_0+1$\\
\item  $i_0'=i_0$ and $b_{i_0}=b_{i_0+1}=\cdots =b_l=0$.
\end{enumerate}
 \end{prop}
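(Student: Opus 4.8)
The plan is to reduce everything to the bottom sequences. Writing $\alpha_i'$, $b_i'$, $l'$ for the data of $T_1(p,q)$, Prop.~\ref{change of bottom sequence} gives $l'=l+1$ and $b_i'=b_i$, so the identity $\alpha_i'-\alpha_{i-1}'=b_i'-b_{l'-i}'$ becomes $\alpha_i'-\alpha_{i-1}'=b_i-b_{l+1-i}$ for $1\le i\le l+1$, with the convention $b_{l+1}=0$. Since Hartley's theorem tells us $(\alpha_i')$ is again trapezoidal, $i_0'$ is well defined, and it suffices to prove $i_0\le i_0'\le i_0+1$ together with the vanishing statement in the case $i_0'=i_0$. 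Throughout I will use that the hypothesis $i_0-1<l-i_0$ is equivalent to $l\ge 2i_0$.

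For the lower bound $i_0'\ge i_0$ I would show $\alpha_i'>\alpha_{i-1}'$ for $1\le i\le i_0-1$. For such $i$ we already know $b_i-b_{l-i}=\alpha_i-\alpha_{i-1}>0$, so it is enough to see $b_{l-i}\ge b_{l+1-i}$; this is exactly (IH2) applied with $h^*=2(l-i)+1$, the inequality $l\ge 2i_0$ guaranteeing $2(l-i)+1\ge 2l-2i_0+3\ge l\ge h$. For the upper bound I would show $\alpha_{i_0+1}'\le\alpha_{i_0}'$, i.e.\ $b_{i_0+1}\le b_{l-i_0}$, after which the strictly increasing initial run of $(\alpha_i')$ has length at most $i_0$, so $i_0'\le i_0+1$. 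If $i_0<l-i_0$, then $\alpha_{i_0}=\alpha_{i_0-1}$ gives $b_{i_0}=b_{l-i_0}$ and (IH3) forces $b$ constant on $[i_0,l-i_0]$, so $b_{i_0+1}=b_{l-i_0}$; if $i_0=l-i_0$ (so $l=2i_0$), then $b_{i_0}=b_{l-i_0}$ holds trivially and (IH2) with $h^*=2i_0+1>l\ge h$ gives $b_{i_0}\ge b_{i_0+1}$. Hence $i_0'\in\{i_0,i_0+1\}$, and if $i_0'=i_0+1$ we are in case~(1).

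The remaining case $i_0'=i_0$ is where I expect the real work to be. Then $\alpha_{i_0}'=\alpha_{i_0-1}'$ gives $b_{i_0}=b_{l+1-i_0}$, and since $l\ge 2i_0$ forces $i_0<l+1-i_0$, (IH3) makes $b$ constant, with some value $c$, on all of $[i_0,l+1-i_0]$; the crux is to prove $c=0$. For $i_0\ge 2$ one has $b_{i_0-1}>c$, because $\alpha_{i_0-1}-\alpha_{i_0-2}=b_{i_0-1}-b_{l-i_0+1}>0$ and $l-i_0+1\in[i_0,l+1-i_0]$; assuming $c>0$, I would derive a contradiction from the interleaved sequence $S_0=b_h,\ S_1=b_0,\ S_2=b_{h-1},\ S_3=b_1,\dots$ of (IH1), which must be strictly increasing then constant. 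When $2i_0+1\le h$, the entries $b_{i_0-1}=S_{2i_0-1}$ and $b_{i_0}=S_{2i_0+1}$ force $b_{i_0-1}\le b_{i_0}=c$ by monotonicity of $S$, contradicting $b_{i_0-1}>c$. When $2i_0+1>h$, one has $h\ge l+1-i_0\ge i_0+1$ (from $b_{l+1-i_0}=c\neq0$ and $l\ge 2i_0$), the whole block $\{m: b_m=c\}\supseteq[i_0,l+1-i_0]$ consists of ``large'' indices occupying even positions $S_{2(h-m)}$, and comparing these with the odd positions lying between them shows $b=c$ also at some index $\le i_0-1$, whereupon (IH3) propagates $b=c$ up to $i_0-1$, again a contradiction. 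For $i_0=1$ the hypothesis $l\ge 2i_0$ gives $l\ge 2$, and $c>0$ would force $h=l$ and then, from $S_0\le S_1\le S_2$, also $b_0=c$, so $b\equiv c$ on $[0,l]$; but the $i=l$ instance of the relation then reads $\alpha_{l-1}=b_0-b_l=0$, contradicting $\alpha_{l-1}>0$. Thus $c=0$, and since $l+1-i_0>l/2$ lies beyond the peak of the unimodal sequence $b$ (unimodality following from (IH1)), $b_{l+1-i_0}=0$ propagates to $b_j=0$ for all $j\ge l+1-i_0$, in particular $b_{i_0}=b_{i_0+1}=\cdots=b_l=0$, which is case~(2). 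The only genuinely delicate point is the index bookkeeping for $(S_j)$ in the subcase $c>0$; every other step is a direct application of the IH properties.
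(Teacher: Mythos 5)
Your proposal is correct and follows essentially the same route as the paper: bound $i_0'$ between $i_0$ and $i_0+1$ via the difference formula $\alpha_i'-\alpha_{i-1}'=b_i-b_{l+1-i}$, and in the case $i_0'=i_0$ use (IH3) to make $b$ constant on $[i_0,l+1-i_0]$ and then the interleaved sequence of (IH1) to force that constant to vanish. The only notable (and harmless) deviations are cosmetic: you invoke (IH2) more systematically for the monotonicity bounds, you organize the (IH1) contradiction by comparing $2i_0+1$ with $h$ rather than $l-i_0+1$ with $h$, and you settle the $i_0=1$ subcase via $\alpha_{l-1}=b_0-b_l>0$ where the paper uses the parity of the $b_i$'s.
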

 
 \begin{proof}
Note that $\alpha_{i_0}-\alpha_{i_0-1}=b_{i_0}-b_{l-i_0}=0$ and hence $b_{i_0}=b_{i_0+1}=\cdots=b_{l-i_0}$ by the third IH property. If $l-i_0>i_0$, then $\alpha_{i_0+1}'-\alpha_{i_0}'=b'_{i_0+1}-b'_{l'-(i_0+1)}=b_{i_0+1}-b_{l-i_0}=0$. Therefore, $i_0'\leq i_0+1$. If $l-i_0=i_0$, then $i_0'\leq i_0+1$ by considering the degree and symmetry of the Alexander polynomial. We move to see $i_0'\geq i_0$.

If $i_0'< i_0+1$, we have $\alpha_{i_0}'-\alpha_{i_0-1}'=b_{i_0}-b_{l-i_0+1}=0$, then by third IH property, $b_{i_0}=b_{i_0+1}=\cdots=b_{l-i_0+1}$. We continue the discussion in two cases. 

\textbf{(Case 1) }If there is $\alpha_{i_0-2}$ term, i.e.\ $i_0\geq 2$, then since $b_{i_0-1}-b_{l-i_0+1}=\alpha_{i_0-1}-\alpha_{i_0-2}>0$, we learn that $b_{i_0-1}>b_{i_0}=b_{i_0+1}=\cdots=b_{l-i_0+1}$. Then $\alpha_{i_0-1}'-\alpha_{i_0-2}'=b_{i_0-1}-b_{l-i_0+2}\geq b_{i_0-1}-b_{l-i_0+1}>0$. Here $b_{l-i_0+1}\geq b_{l-i_0+2}$ follows from the second IH property. So in this case $i_0'=i_0$. Now let $h$ be the integer in the first IH property. If $h<l-i_0+1$, then by the first IH property, $b_{l-i_0+1}=\cdots=b_l=0$ and therefore $b_{i_0}=b_{i_0+1}=\cdots=b_l=0$. We claim $h$ cannot be greater than or equal to $l-i_0+1$. If not, $h\geq l-i_0+1$, then for some $j$ we have $S_j=b_{l-i_0+1}$, then $S_{j+1}$ must be $b_k$ for some $k\leq i_0-1$; otherwise, we have $b_{i_0-1}\leq b_{l-i_0+1}$ in view of first IH property, but this contradicts our earlier observation that $b_{i_0-1}>b_{l-i_0+1}$. This understood, we further oberserve that since $S_{j+2}=b_{l-i_0}=b_{l-i_0+1}=S_j$ (the existence of $S_{j+2}$ follows from the assumption $i_0-1<l-i_0$), so we have $S_j=S_{j+1}$ by the first IH property. Therefore $b_k=b_{l-i_0+1}$, and since $k\leq i_0-1\leq l-i_0+1$ we have $b_{i_0-1}=b_{l-i_0+1}$ by the third IH property. However, this contradicts $b_{i_0-1}>b_{l-i_0+1}$, so $h$ cannot be greater than or equal to $l-i_0+1$. In summary, we have $i_0'=i_0$ and $b_{i_0}=b_{i_0+1}=\cdots=b_{l}=0$.

\textbf{(Case 2) }If there is no $\alpha_{i_0-2}$ term, i.e.\ $i_0=1$, then $i_0'=1$ and $\alpha_0=\alpha_1=\cdots=\alpha_{l-1}$. Since $0=\alpha_1-\alpha_0=b_1-b_{l-1}$, by the third IH property, we have $b_1=b_2=\cdots=b_{l-1}$. Moreover, $\alpha_1'-\alpha_0'=b_1-b_l=0$ implies $b_1=\cdots=b_l$. If $b_l$ is not zero, then the first IH property implies $b_l\leq b_0\leq b_{l-1}=b_l$, and hence all the $b_i's$ are equal by the third IH property, but this is impossible since by definition one and only one of the $b_i$'s is odd and all the others are even. Therefore, we must have $b_1=\cdots=b_l=0$, with $b_0$ being the only nonzero term.

In summary, after $T_1$, the starting index of the stable terms either stays or increases by 1, and moreover, when the starting index stays, more than half of the $b_i$ sequence are zero.
\end{proof}

To prove Prop.\ \ref{T_1 is good}, we need further control of the signature in the case when the starting index of the stable terms stays level. This is what the following proposition addresses.

\begin{prop}\label{signature is positive when half of b_i vanish}
Let $(p,q)$ be an admissible pair with $\{b_i\}$ such that
\begin{displaymath}
b_0, b_1,..., b_{i_0-1}>0,
\end{displaymath}
\begin{displaymath}
b_{i_0}=\cdots =b_l=0
\end{displaymath}
where $l=l(p,q)$ and $i_0\leq \lfloor \frac{l}{2}\rfloor$. Then $\sigma(p,q)\geq 0$.
\end{prop}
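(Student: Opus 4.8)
\emph{Proof proposal.} The plan is to read $\sigma(p,q)$ off the extended diagram via Shinohara's theorem (Theorem \ref{Shinohara} and the remark after it): $\sigma(p,q)$ is the sum of the signs of the crossings of the principal underarc with the overarcs, the latter oriented downward. First I would record the shape of the principal underarc. Traversed from its initial point (at height $0$ on the grid line $W_s$ at which it starts, so $b_s$ is the unique odd entry) to its terminal point (at height $p$ on $W_{l-s}$), the underarc decomposes into alternating maximal \emph{rightward} and \emph{leftward} runs, the turnarounds being the top loops (near height $p$) and the bottom loops (near height $0$). If $k$ is the number of bottom loops of the principal underarc, there are $k+1$ rightward runs and $k$ leftward runs, and the start accounts for the extra $+1$ in $b_s$. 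An orientation computation (using that the overarcs point downward) shows that every overarc crossing occurring on a rightward run is positive and every one occurring on a leftward run is negative; hence $\sigma(p,q)$ equals the number of overarc crossings on rightward runs minus the number on leftward runs.

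Next I would bring in the hypothesis. Together with the symmetry of the extended diagram (which gives $\alpha_i=\alpha_{l-1-i}$ and, for the analogous top sequence $t_i$ — twice the number of top loops at $W_i$, plus one if the underarc ends there — the identity $t_i=b_{l-i}$), the assumption $b_i=0$ for $i\ge i_0$ forces $t_i=0$ for $i\le l-i_0$. Thus all bottom loops and the initial point lie on $W_0,\dots,W_{i_0-1}$, all top loops and the terminal point lie on $W_{l-i_0+1},\dots,W_l$, and since $i_0\le\lfloor l/2\rfloor$ the block $W_{i_0},\dots,W_{l-i_0}$ is a nonempty ``middle'' carrying no loops; in particular $q<p$, as a middle grid line could otherwise not be crossed by a straight run. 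Consequently each of the $2k+1$ runs crosses every middle grid line exactly once, all of these are overarc crossings, and each middle grid line contributes $(k+1)-k=+1$ to $\sigma$, for a middle total of $l-2i_0+1\ge 1$. The $180^\circ$ rotational symmetry exchanges $W_i\leftrightarrow W_{l-i}$ and preserves crossing signs, so the left block $W_0,\dots,W_{i_0-1}$ and the right block $W_{l-i_0+1},\dots,W_l$ contribute equally; bookkeeping the runs through the left block (each rightward run enters at its starting point and exits through $W_{i_0}$, each leftward run enters through $W_{i_0}$ and terminates at a bottom loop) gives that this common contribution is $i_0-1-s-k$. Therefore $\sigma(p,q)=l-1-2s-2k$, and the proposition is reduced to the inequality $l\ge 2s+2k+1$.

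To finish I would prove the rigidity statement: under the hypothesis one has $s=0$ and $k=i_0-1$, equivalently $b=(1,2,2,\dots,2,0,\dots,0)$ with exactly $i_0-1$ twos. Granting this, $\sigma(p,q)=l-2(i_0-1)-1=l-2i_0+1\ge 1>0$ since $i_0\le\lfloor l/2\rfloor$. The rigidity I would establish by induction along the moves $T_1,T_2,T_3$ of Proposition \ref{inductible lemma}, using Proposition \ref{change of bottom sequence}. The move $T_2$ makes every entry of the bottom sequence positive, so it never yields a pair satisfying the hypothesis; $T_1$ leaves the bottom sequence unchanged and merely lengthens it (and raises $\sigma$ by $1$ by Lemma \ref{effect of T_1 on signature}), so it suffices to understand the first moment a pair enters the class satisfying the hypothesis, which happens only after a $T_3$ move followed by sufficiently many $T_1$'s; and a direct analysis of $T_3(p,q)=(p,2p-q)$ through $b_i'=2\alpha_i-b_i$, combined with the IH properties and the relations $b_0=\alpha_0$, $b_i=\alpha_i-\alpha_{i-1}$ (valid for $1\le i\le i_0-1$ once $i_0\le\lceil l/2\rceil$), pins the bottom sequence to the asserted shape.

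I expect the rigidity of the bottom sequence to be the main obstacle: the IH properties on their own are too weak — they admit bottom sequences such as $(2,1,0,0,0)$ that are never actually realized by an admissible pair — so one genuinely has to feed in realizability by tracking the three moves rather than arguing from the IH properties alone. A secondary nuisance is the crossing-sign bookkeeping in the boundary case $i_0=\lfloor l/2\rfloor$, where the middle is a single grid line and the left and right blocks abut it; this is handled by the same run decomposition but needs care.
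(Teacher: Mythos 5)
Your route is genuinely different from the paper's, which proves the proposition in three lines: the (unnamed) lemma immediately preceding it shows any pair satisfying the hypothesis has the form $T_1^{\,l-i_0}(p',q')$ with $l(p',q')=i_0$; Lemma \ref{signature is less than the bredth of the alexander poly} gives $|\sigma(p',q')|\le i_0-1$; and Lemma \ref{effect of T_1 on signature} gives $\sigma(p,q)=\sigma(p',q')+(l-i_0)\ge l-2i_0+1>0$. No crossing count is ever performed. Your diagrammatic approach could in principle work, but as written it contains two concrete errors that feed each other. The ``rigidity'' statement on which everything hinges is false: take $(p,q)=T_1^4(T_2(2,1))=(22,5)$. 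Its bottom sequence is $(2,2,1,0,0,0,0)$, with $l=6$ and $i_0=3\le\lfloor 6/2\rfloor$, so the hypothesis of the proposition holds, yet the sequence is not $(1,2,\dots,2,0,\dots,0)$: the principal underarc starts at $W_2$, not $W_0$, and indeed any pair reached through a $T_2$ has $b_0=2\alpha_0+b_{l}\ge 2$ afterwards, so $s=0$ generically fails. The issue is not (as you suspected) that the IH properties are too weak to prove rigidity; the statement itself is not true.

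The second error is the formula $\sigma=l-1-2s-2k$, and it is what manufactured the need for rigidity in the first place. For $(22,5)$ it gives $6-1-4-4=-3$, whereas $\sigma(22,5)=\sum_{i=1}^{21}(-1)^{\lfloor 5i/22\rfloor}=5$ (which also matches the paper's computation $\sigma(2,5)+4=1+4=5$). Tracing the underarc of $(22,5)$ shows where the bookkeeping breaks: at a bottom loop on $W_c$ joining $x_{c,m}$ and $x_{c,-m}$, the leftward run terminates at the negative-height end (below the overarc, hence \emph{no} crossing at $W_c$), while the next rightward run departs from the positive-height end, which \emph{is} a positive crossing at $W_c$; so in this example each bottom loop contributes $+1$, not $-1$, to the block count, giving $i_0-1-s+k$ rather than $i_0-1-s-k$ and hence $\sigma=l-1-2s+2k\ge l-2i_0+1>0$ with no rigidity needed. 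But to complete your argument along these lines you would still have to prove that the underarc always enters a bottom loop at its negative end (and dually for top loops) — a nontrivial combinatorial claim you have not addressed, and one the paper's induction sidesteps entirely.
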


To prove the proposition, we need four lemmas.
\begin{lem}\label{signature is less than the bredth of the alexander poly}
For any admissible pair $(p,q)$, let $\sigma=\sigma(p,q)$ and $l=l(p,q)$, then $|\sigma|\leq l-1$.
\end{lem}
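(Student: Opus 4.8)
The plan is to prove the lemma by induction along the tower of $T_i$-moves provided by Proposition \ref{inductible lemma}: every admissible pair is obtained from $(1,1)$ by a finite sequence of moves $T_1,T_2,T_3$, so it suffices to verify the inequality $|\sigma(p,q)|\le l(p,q)-1$ for the pair $(1,1)$ and to check that it is preserved by each of the three moves. The key observation is that we have already recorded exactly what each move does to both sides of this inequality, so the proof is essentially bookkeeping with facts already in hand rather than new geometry.

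For the base case, $(1,1)$ is the unknot, whose (reduced) Alexander polynomial is $1$; hence $l(1,1)=1$ and $\sigma(1,1)=0$, so $|\sigma|=0=l-1$. For the inductive step, assume $(p,q)$ is admissible, write $l=l(p,q)$, and suppose $|\sigma(p,q)|\le l-1$. If we apply $T_2$, then $l$ is unchanged by Proposition \ref{change of bottom sequence}, and $\sigma(p,q)=\sigma(p,2p+q)$ by the computation in Subsection 3.4, so the inequality persists verbatim. Applying $T_3$ likewise leaves $l$ unchanged (Proposition \ref{change of bottom sequence}) and only flips the sign of $\sigma$ (Subsection 3.5), so $|\sigma|$, and hence the inequality, is unchanged. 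The only move that affects both quantities is $T_1$: here $l(T_1(p,q))=l+1$ by Proposition \ref{change of bottom sequence}, and $\sigma(T_1(p,q))=\sigma(p,q)+1$ by Lemma \ref{effect of T_1 on signature}. Therefore $|\sigma(T_1(p,q))|=|\sigma(p,q)+1|\le |\sigma(p,q)|+1\le (l-1)+1=l=l(T_1(p,q))-1$, which completes the induction.

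I do not expect a substantial obstacle here; the content of the lemma is simply that $T_1$ raises $l$ by exactly one while raising $\sigma$ by exactly one, so the slack $l-1-|\sigma|\ge 0$ can only be preserved or increased, never made negative, while the remaining two moves touch neither $l$ nor $|\sigma|$. The only point to be careful about is that $T_3$ may only be applied after $T_1$, as noted after the definition of the $T_i$ moves; but this restriction plays no role in the induction, since the argument treats each move independently of the moves preceding it.
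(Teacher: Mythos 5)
Your proof is correct and follows essentially the same route as the paper: induction over the sequence of $T_i$-moves from $(1,1)$, using that $T_2$ and $T_3$ preserve both $|\sigma|$ and $l$ while $T_1$ increases $l$ by one and $|\sigma|$ by at most one. The paper states this more tersely but the argument is identical.
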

\begin{proof}
Note that this statement is true for $(1,1)$. Note $T_2$ and $T_3$ does not change $|\sigma|$ or $l$, while $T_1$ increase $l$ by one and increase $|\sigma|$ at most by one. So inductively, we can show the statement is true for all admissible pairs in view of Prop.\ \ref{inductible lemma}. 
\end{proof}

\begin{lem}\label{$T_2(p,q)$ has no zero terms in its bottom sequence}
$T_2(p,q)$ has no zero terms in its bottom sequence.
\end{lem}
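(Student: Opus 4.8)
The plan is to read the bottom sequence of $T_2(p,q)$ directly off Proposition~\ref{change of bottom sequence} and observe that every entry is visibly positive, because each one carries a summand $2\alpha_i$ with $\alpha_i>0$ a coefficient of the Alexander polynomial of $(p,q)$. (This is exactly the fact one wants afterwards: a pair whose bottom sequence has a zero among $b_0,\dots,b_{l-1}$ cannot have been produced by a $T_2$ move, so in such a situation the last move in the induction must be $T_1$ or $T_3$.)

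Concretely, set $l=l(p,q)$ and let $\alpha_0,\dots,\alpha_{l-1}$ and $b_0,\dots,b_l$ be the arc and bottom sequences of $(p,q)$; recall $\alpha_i\ge 1$ for $0\le i\le l-1$, these being the positive coefficients of the Alexander polynomial, while $b_j\ge 0$ always. By Proposition~\ref{change of bottom sequence} (used in the form appearing in the proof of the $T_2$ case in Subsection~3.3), $T_2(p,q)$ has length $l$ and bottom sequence $b_i'=2\alpha_i+b_{l-i}$. For $0\le i\le l-1$ this gives $b_i'\ge 2\alpha_i\ge 2>0$ at once. For the boundary index $i=l$ one has $\alpha_l=0$, hence $b_l'=b_0$, and $b_0>0$ because specializing the identity $\alpha_i-\alpha_{i-1}=b_i-b_{l-i}$ (valid for $1\le i\le l$) to $i=l$ and using $\alpha_l=0$ yields $b_0=b_l+\alpha_{l-1}\ge\alpha_{l-1}\ge 1$. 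Hence $b_i'>0$ for every $i$, which is the assertion.

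I expect essentially no obstacle here: the lemma is a one-line positivity observation resting on the positivity of the Alexander coefficients together with the change-of-data table. The only point that warrants a moment of care is the bookkeeping at the endpoint $i=l$: one must remember that $\alpha_l$ is to be read as $0$ and that $b_0$ is strictly positive rather than merely nonnegative, for which one invokes the arc--bottom relation. If one instead regards the bottom sequence of a length-$l$ pair as indexed only by $0\le i\le l-1$, this last case disappears and the estimate $b_i'\ge 2\alpha_i$ settles the statement outright.
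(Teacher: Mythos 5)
Your proposal is correct and follows essentially the same route as the paper: read off $b_i'=2\alpha_i+b_{l-i}$ from Proposition~\ref{change of bottom sequence}, note $b_i'\geq 2\alpha_i>0$ for $i\leq l-1$, and handle $i=l$ via $b_l'=b_0>0$. Your extra step justifying $b_0>0$ from the relation $\alpha_l-\alpha_{l-1}=b_l-b_0$ is a small refinement of the paper's proof, which simply asserts $b_0>0$.
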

\begin{proof}
Let ${b_i}$ stands for the bottom sequence for $(p,q)$, and ${b_i'}$ for $T_2(p,q)$, $l=l(p,q)=l'(T_2(p,q))=l'$. Then $b_i'=2\alpha_i+b_{l-i}\geq 2\alpha_i>0$ when $i\leq l-1$ and $b_l'=2\alpha_l+b_0=b_0>0$.
\end{proof}

\begin{lem}\label{effect of T3T1}
Let $\{b_i'\}_{0 \leq i\leq l+1}$ be the bottom sequence for $T_3\circ T_1(p,q)$. Then the only zero term is $b_{l+1}'$.
\end{lem}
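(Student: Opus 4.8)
The plan is to follow the arc sequence $\alpha_i$ and the bottom sequence $b_i$ of $(p,q)$ through the two moves, using Prop.~\ref{change of bottom sequence} together with the identity $\alpha_i-\alpha_{i-1}=b_i-b_{l-i}$ ($1\le i\le l$) of Hartley \cite{Har79}. By Prop.~\ref{change of bottom sequence}, $T_1(p,q)=(p+q,q)$ is admissible of length $l+1$ with bottom sequence $\{b_i\}_{0\le i\le l+1}$ (reading $b_{l+1}=0$); write $\alpha_i^{(1)}$ for its arc sequence, so $\alpha_i^{(1)}=0$ for $i\ge l+1$ since by Hartley's theorem the Alexander polynomial of an admissible pair of length $L$ has degree $L-1$. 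As $p+q>q$, the move $T_3$ applies, and Prop.~\ref{change of bottom sequence} gives that $T_3\circ T_1(p,q)$ is admissible of length $l+1$ with $b_i'=2\alpha_i^{(1)}-b_i$ for $0\le i\le l+1$. The case $i=l+1$ yields $b_{l+1}'=2\cdot 0-0=0$ at once, so all that remains is to prove $b_i'>0$ for $0\le i\le l$.

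The main step is to identify the arc sequence of $T_1(p,q)$: I claim $\alpha_i^{(1)}=\alpha_i+b_{l-i}$ for $0\le i\le l$. Applying Hartley's identity to the admissible pair $T_1(p,q)$ (length $l+1$, bottom sequence $\{b_i\}$) gives $\alpha_i^{(1)}-\alpha_{i-1}^{(1)}=b_i-b_{l+1-i}$ for $1\le i\le l+1$. Put $d_i=\alpha_i^{(1)}-\alpha_i$; subtracting the identity for $(p,q)$ yields $d_i-d_{i-1}=b_{l-i}-b_{l+1-i}$ for $1\le i\le l$, which telescopes to $d_i=d_0+b_{l-i}-b_l$. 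To pin down $d_0$, use the instance $i=l+1$: since the Alexander polynomial of $T_1(p,q)$ has degree $l$ we have $\alpha_{l+1}^{(1)}=0$, so $-\alpha_l^{(1)}=b_{l+1}-b_0=-b_0$, i.e.\ $\alpha_l^{(1)}=b_0$; as $\alpha_l=0$ this reads $d_l=b_0$, and comparing with $d_l=d_0+b_0-b_l$ forces $d_0=b_l$. Hence $d_i=b_{l-i}$, which is the claim.

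Given the claim, for $0\le i\le l$ we compute
\[
b_i'=2\alpha_i^{(1)}-b_i=2\alpha_i+2b_{l-i}-b_i .
\]
Using the relation $\alpha_i-\alpha_{i-1}=b_i-b_{l-i}$ — valid for $0\le i\le l$ with the conventions $\alpha_{-1}=\alpha_l=0$, the endpoint case $i=0$ following from the case $i=l$ and the symmetry $\alpha_{l-1}=\alpha_0$ of the Alexander polynomial — this simplifies to $b_i'=\alpha_i+\alpha_{i-1}+b_{l-i}$. For each $i$ with $0\le i\le l$, at least one of the indices $i,i-1$ lies in $\{0,\dots,l-1\}$, so the corresponding $\alpha$ is a strictly positive coefficient of the Alexander polynomial of $(p,q)$; since also $b_{l-i}\ge 0$, we get $b_i'\ge 1>0$. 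Combined with $b_{l+1}'=0$, this is exactly the assertion of the lemma.

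The step I expect to be the real work is the middle paragraph: identifying $\alpha_i^{(1)}$ requires combining Hartley's identity with the degree bound on the Alexander polynomial and keeping careful track of the boundary conventions ($\alpha_l=\alpha_{l+1}^{(1)}=b_{l+1}=0$) in order to fix the additive constant $d_0$. Once $\alpha_i^{(1)}=\alpha_i+b_{l-i}$ is in hand, the conclusion is one more application of Hartley's identity together with the positivity of the Alexander coefficients.
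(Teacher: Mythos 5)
Your proof is correct, but it reaches the conclusion by a genuinely different route than the paper. Both arguments begin identically with $b'_{l+1}=2\cdot 0-0=0$. From there the paper only establishes positivity of the single term $b'_l=2\alpha_l-b_l$ (here $\alpha_l$ denotes the last arc number of $T_1(p,q)$), and it does so geometrically: writing $T_1(p,q)=(p',q')$ and using $p'>q'$, a case analysis on the extended diagram (Fig.~\ref{T3T1}) shows each bottom loop at $W_l$ forces at least two connecting arcs to $W_{l+1}$, whence $\alpha_l\geq b_l$, with the cases $b_l$ zero or odd disposed of separately; the first IH property then upgrades $b'_l>0$ to the statement that no earlier term vanishes. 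You instead stay entirely algebraic: you derive the closed formula $\alpha_i^{(1)}=\alpha_i+b_{l-i}$ for the arc sequence of $T_1(p,q)$ by telescoping Hartley's identity for the two pairs and pinning down the additive constant with the boundary values $\alpha_l=\alpha^{(1)}_{l+1}=0$, and then $b_i'=\alpha_i+\alpha_{i-1}+b_{l-i}>0$ for all $0\leq i\leq l$ follows from the strict positivity of the Alexander coefficients of $(p,q)$ (which the paper takes as known from Hartley, so there is no circularity). I checked the telescoping, the determination of $d_0=b_l$, and the extension of $\alpha_i-\alpha_{i-1}=b_i-b_{l-i}$ to $i=0$ via the symmetry $\alpha_0=\alpha_{l-1}$; all are sound, and the formula is consistent with the example $(4,3)\mapsto(7,3)$. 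What your approach buys is a proof with no figure-based case analysis and no appeal to the IH properties, plus the explicit identity $\alpha_i^{(1)}=\alpha_i+b_{l-i}$ describing how $T_1$ transforms the Alexander polynomial, which is of independent use; what the paper's approach buys is brevity, since it only needs to control the last term of the bottom sequence and lets IH1 do the rest.
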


\begin{figure}
\centering{
\fontsize{0.5cm}{2em}
\resizebox{10cm}{!}{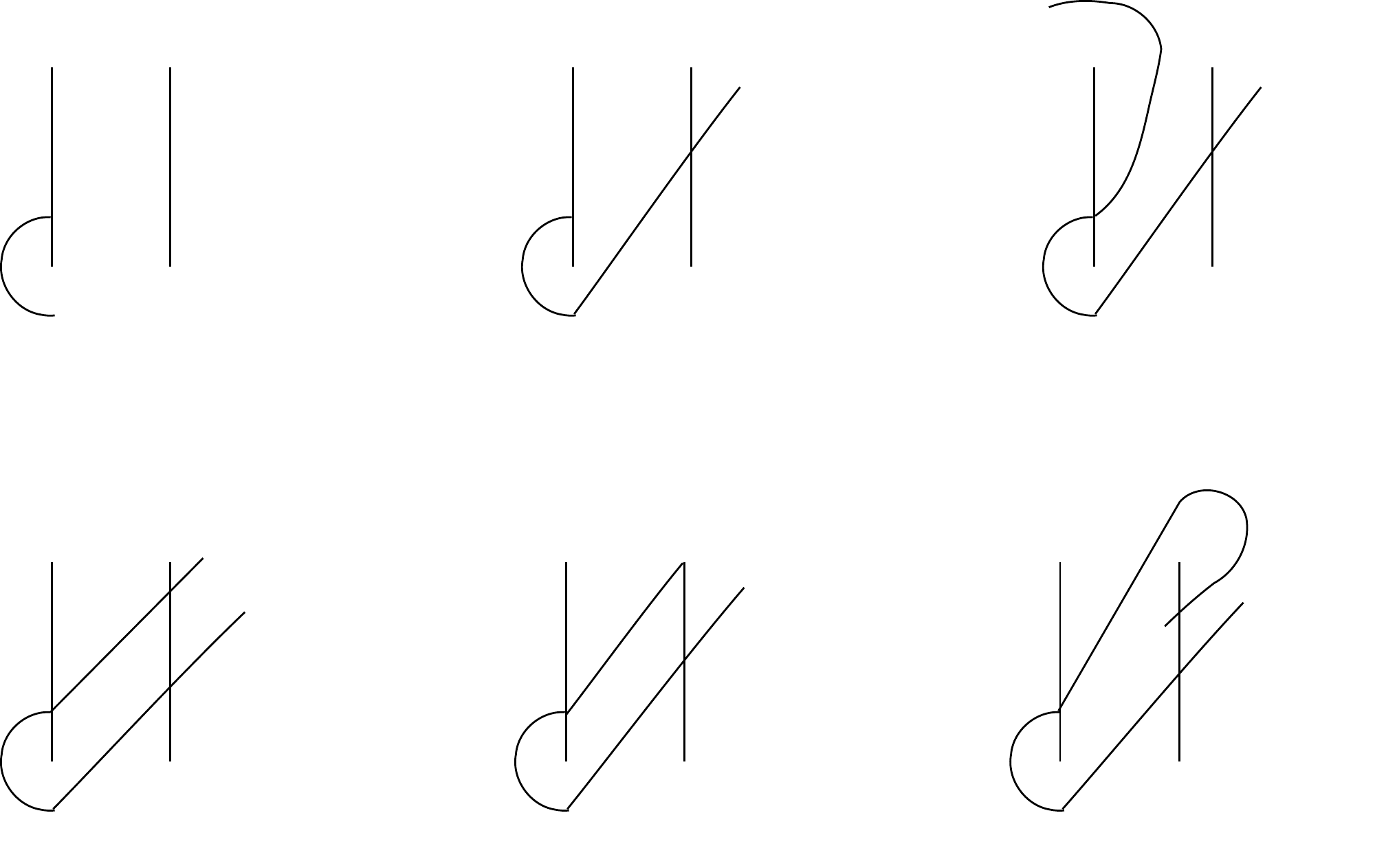}
\caption{}
\label{T3T1}
}
\end{figure}

\begin{proof}
Let $l=l(p,q)$, and for $T_1(p,q)$, we denote by 
\begin{displaymath}
b_0, b_1, ..., b_l, 0
\end{displaymath}
\begin{displaymath}
\alpha_0, \alpha_1,...,\alpha_{l}
\end{displaymath}
the bottom sequence and connecting arc sequence.

So $b'_{l+1}=2\alpha_{l+1}-b_{l+1}=0-0=0$. We claim that $b'_l=2\alpha_l-b_l>0$, hence by the first IH property we know that $b_{l+1}'$ is the only zero term. To see $b'_l>0$, note $T_1(p,q)=(p+q,q)=(p',q')$, so $p'>q'>0$. First, if $b_l$ is zero or odd, then we are done since $2\alpha_l$ will never be zero or odd. Second, if $b_l$ is positive and even, see Fig.\ref{T3T1}: the lower arc joining a bottom loop of $(p',q')$ at $W_l$ must hit $W_{l+1}$ since $p'>q'$ (see Fig.\ \ref{T3T1}.(b)); the upper arc joining the bottom loop cannot turn over $W_l$ (See Fig.\ \ref{T3T1}.(c)), for that would imply $p'\leq \lfloor q'/2\rfloor+\lfloor q'/2\rfloor\leq q'$; therefore, what left are three possiblities (see the second row of Fig.\ \ref{T3T1}) and in all three cases, the existence of one bottom circle gives rise to at least two connecting arcs between $W_l$ and $W_{l+1}$, hence $\alpha_l\geq b_l$, which implies $b'_l>0$. 
\end{proof}

\begin{lem}
Starting from $(1,1)$, to obtain an admissible pair $(p,q)$ with ${b_i}$ such that
\begin{displaymath}
b_0, b_1,..., b_{i_0-1}>0,
\end{displaymath}
\begin{displaymath}
b_{i_0}=\cdots =b_l=0
\end{displaymath}
and $i_0\leq \lfloor \frac{l}{2} \rfloor$,
then there must be at least $(l-i_0)$ $T_1$'s successively in the end, i.e.\ $(p,q)=T_1^{l-i_0}(p',q')$ for some $(p',q')$.
\end{lem}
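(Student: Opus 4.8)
The plan is to analyze what the $T_i$ moves do to the bottom sequence, using the table in Prop.\ \ref{change of bottom sequence}, and to read off when the tail of the bottom sequence can be a long block of zeros. The target configuration is $b_0,\dots,b_{i_0-1}>0$ and $b_{i_0}=\cdots=b_l=0$ with $i_0\le\lfloor l/2\rfloor$, so at least half of the $b_i$'s (namely $b_{i_0},\dots,b_l$, which is $l-i_0+1\ge l/2$ terms) vanish. I would argue by examining the last move in the sequence producing $(p,q)$ and showing that it must be a $T_1$, and that after factoring off one $T_1$ the resulting pair $(p',q')$ still has the same shape with length $l-1$ and the same $i_0$; then induct on $l$.

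First I would set up the induction on the length $l$. The base cases are pairs of small length; when $l-i_0\le 1$ there is nothing to prove or the statement is vacuous, so we may assume the zero block $b_{i_0}=\cdots=b_l=0$ has length $\ge 2$. Now consider the last $T_i$ used to build $(p,q)$. If it were $T_2$, then by Lemma \ref{$T_2(p,q)$ has no zero terms in its bottom sequence} the bottom sequence of $(p,q)$ would have no zero terms at all, contradicting $b_l=0$. If it were $T_3$, then since $T_3$ can only follow $T_1$ (as noted after the definition of the $T_i$), we would have $(p,q)=T_3\circ T_1(p'',q'')$, and by Lemma \ref{effect of T3T1} the only zero term of its bottom sequence is the last one $b_{l}'$ — again contradicting the presence of a zero block of length $\ge 2$. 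Hence the last move must be $T_1$, so $(p,q)=T_1(p',q')$.

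Next I would identify the bottom sequence of $(p',q')$ via Prop.\ \ref{change of bottom sequence}: $T_1$ leaves the bottom sequence unchanged and increases the length by $1$, i.e.\ if $(p,q)$ has bottom sequence $b_0,\dots,b_{l-1},b_l$ and length $l$, then $(p',q')$ has bottom sequence $b_0,\dots,b_{l-1}$ and length $l-1$. (Here one must be slightly careful about indexing: writing $l'=l-1$ for the length of $(p',q')$, its bottom sequence is $b_0,\dots,b_{l'}$, which is exactly $b_0,\dots,b_{l-1}$, agreeing with $(p,q)$ in positions $0,\dots,l-1$.) Thus $(p',q')$ satisfies $b_0,\dots,b_{i_0-1}>0$ and $b_{i_0}=\cdots=b_{l-1}=0$ with the same $i_0$; and $i_0\le\lfloor l/2\rfloor$ together with $l-i_0\ge 2$ gives $i_0\le l-2=l'-1<l'$, hence $i_0\le\lfloor l'/2\rfloor$ (using $i_0\le l'-1$ and, more precisely, that the zero block still occupies at least half of the shortened sequence — this needs the inequality $l-i_0+1\ge l/2$ to be re-derived for $l'$, which follows since we are only removing one zero term from the tail). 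So $(p',q')$ meets the hypotheses with length $l'=l-1$, and by the induction hypothesis $(p',q')=T_1^{\,l'-i_0}(p'',q'')=T_1^{\,l-1-i_0}(p'',q'')$ for some admissible $(p'',q'')$. Therefore $(p,q)=T_1(p',q')=T_1^{\,l-i_0}(p'',q'')$, completing the induction.

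The main obstacle I anticipate is the bookkeeping around the index $i_0$ and the bound $i_0\le\lfloor l/2\rfloor$ under the passage from $l$ to $l'=l-1$: one must verify that removing the last (zero) term from the bottom sequence preserves the "at least half are zero" condition, which is where the hypothesis $i_0\le\lfloor l/2\rfloor$ (equivalently $l-i_0\ge\lceil l/2\rceil$) is used essentially, rather than just $i_0<l$. A secondary point of care is ruling out the possibility that the sequence of moves producing $(p,q)$ could end with $T_3$ but not immediately preceded by the matching $T_1$ structure assumed in Lemma \ref{effect of T3T1}; this is handled by the standing convention that $T_3$ is applied only after $T_1$, so any terminal $T_3$ is genuinely of the form $T_3\circ T_1$. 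Once these indexing subtleties are pinned down, the argument is a clean induction driven entirely by the three already-established lemmas.
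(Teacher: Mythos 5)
Your core idea coincides with the paper's: Lemma \ref{$T_2(p,q)$ has no zero terms in its bottom sequence} and Lemma \ref{effect of T3T1} force the last move to be $T_1$ whenever the bottom sequence has at least two zero terms, and one then peels off terminal $T_1$'s. However, the inductive step as written contains a false implication. You pass from $(p,q)$ of length $l$ to $(p',q')$ of length $l'=l-1$ and assert that $i_0\le\lfloor l/2\rfloor$ together with $i_0\le l'-1$ yields $i_0\le\lfloor l'/2\rfloor$; this is not true. When $l$ is even and $i_0=l/2$ (which the hypothesis permits), one has $\lfloor l'/2\rfloor=\lfloor (l-1)/2\rfloor=l/2-1<i_0$, so $(p',q')$ fails the hypothesis of the lemma and your induction hypothesis cannot be invoked. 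Concretely, for $l=4$, $i_0=2$ the pair $(p',q')$ has $l'=3$ and bottom sequence $b_0,b_1>0$, $b_2=b_3=0$, with $i_0=2>\lfloor 3/2\rfloor=1$. Your parenthetical claim that the zero block ``still occupies at least half of the shortened sequence \dots\ since we are only removing one zero term from the tail'' is precisely where the argument breaks: deleting one term from a block that occupies just over half of the sequence can leave a block occupying less than half of what remains.

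The repair is straightforward and leads to the paper's actual argument: the hypothesis $i_0\le\lfloor l/2\rfloor$ is used only to guarantee that the bottom sequence has at least two zero terms ($l-i_0+1\ge 2$), and it is this weaker condition that should be propagated (equivalently, induct on the number of trailing zeros). The paper avoids induction altogether: write $(p,q)=T_1^{k}(p',q')$ with $k$ maximal, note that each terminal $T_1$ appends exactly one zero to the bottom sequence, and observe that the pair preceding this block has at most one zero term --- none if it is the output of $T_2$, exactly one if it is the output of $T_3$ (via Lemma \ref{effect of T3T1}) or is $(1,1)$ --- whence $k\ge l-i_0$. With the induction hypothesis weakened as above, your argument becomes correct and is essentially the same as the paper's.
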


\begin{proof} 
Note that there are $l-i_0+1$ zero terms in the bottom sequence of $(p,q)$. The operator cannot end with $T_2$ in view of Lemma \ref{$T_2(p,q)$ has no zero terms in its bottom sequence}. If the operator end with $T_3$, then since by definition $T_3$ cannot be applied successively or after $T_2$, nor can it be applied to $(1,1)$ directly, there must be a $T_1$ before it and hence $l\geq 2$. In view of Lemma \ref{effect of T3T1}, the pair we get will have only one zero term in its bottom sequence, not satisfying the assumption that more than half of the $b_i$'s are zero. So $(p,q)=T_1^k(p',q')$ for some $k$. If there is a $T_2$ before $T_1^k$, then $k=(l-i_0+1)$. If there is $T_3$ before $T_1^k$ or $(p',q')=(1,1)$, then there is already a zero in the bottom sequence and hence $k=l-i_0$.
\end{proof}

Now we are ready to give a proof to Prop.\ \ref{signature is positive when half of b_i vanish}.

\begin{proof}[Proof of Prop.\ \ref{signature is positive when half of b_i vanish}]
By the lemma above, $(p,q)=T_1^{l-i_0}(p',q')$. Let $l=l(p,q)$, $l'=l'(p',q')$. Then $l'=l-(l-i_0)=i_0$ and $|\sigma'(p',q')|<i_0$ in view of Lemma \ref{signature is less than the bredth of the alexander poly}. So by Prop.\ \ref{change of bottom sequence} we have $\sigma(p,q)=\sigma'(p',q')+(l-i_0)\geq l-2i_0\geq 0$.
\end{proof}

Finally, with all these preparations we are ready to prove Prop.\ \ref{T_1 is good}, hence concluding the proof of the main theorem.
\begin{proof}[Proof of Prop.\ \ref{T_1 is good}]
Recall we want to prove Theorem \ref{reformulation of the main theorem} is true for $T_1(p,q)$ provided it is true for $(p,q)$. Let $l=l(p,q)$, $l'=l(T_1(p,q))=l+1$. We separate the discussion into two cases.

\textbf{(Case 1) }If there are no stable terms in the Alexander polynomial corresponding to $(p,q)$, then by Prop.\ \ref{radius change if there were no stable term}, there are exactly two stable terms in the Alexander polynomial corresponding to $T_1(p,q)$, and hence the radius of stable terms is $1$. Recall in the proof of Prop.\ \ref{radius change if there were no stable term} we observed $l$ is odd in this case, which implies the length $l'$ for $T_1(p,q)$ is even. This in turn implies $T_1(p,q)$ corresponds to a two-component link, so $|\sigma(T_1(p,q))|\geq 1$ since it must be odd. Hence $\lfloor \frac{|\sigma|+1}{2} \rfloor \geq 1$.

\textbf{(Case 2) }If there are some stable terms in the Alexander polynomial corresponding to $(p,q)$. Let $i_0$, $i_0'$ be as in Prop.\ \ref{radius change when there are stable terms}. When $l=2n+1$, $|\sigma|=2k$. By assumption $k\geq \lfloor\frac{l-2(i_0-1)}{2}\rfloor=n-i_0+1$. If $i_0'=i_0+1$, then $\lfloor \frac{l'-2(i_0'-1)}{2}\rfloor=n-i_0+1\leq k$, and $\lfloor \frac{|\sigma'|+1}{2}\rfloor \geq \lfloor \frac{(2k-1)+1}{2}\rfloor=k$. If $i_0'=i_0$, then $\lfloor \frac{l'-2(i_0'-1)}{2}\rfloor=n-i_0+2\leq k+1$, and by Prop.\ \ref{radius change when there are stable terms}, Prop.\ \ref{signature is positive when half of b_i vanish} and Lemma \ref{effect of T_1 on signature},  $\lfloor \frac{|\sigma'|+1}{2}\rfloor \geq k+1$. When $l=2n$, the argument is similar and hence omitted.
\end{proof}
\bibliographystyle{abbrv}
\bibliography{HMconjbib}
\end{document}